\tikzset{cross/.style={cross out, draw=black, minimum size=2*(#1-\pgflinewidth), inner sep=0pt, outer sep=0pt},	
	cross/.default={1pt}}
\newtheorem{fact}[theorem]{Fact}
\renewcommand\epsilon{\varepsilon}
\newcommand\F{\mathbb{F}}
\newcommand\Fq{\mathbb{F}_q}
\newcommand\Z{\mathbb{Z}}
\newcommand\R{\mathbb{R}}
\newcommand\M{\mathbb{M}}
\newcommand\PP{\mathbb{P}}
\newcommand\K{\mathbb{K}}
\newcommand\N{\mathbb{N}}
\newcommand\T{\mathbb{T}}
\newcommand\A{\mathbb{A}}
\newcommand\bft{\mathbf{t}}
\newcommand\bfun{\mathbf{1}}
\newcommand\bfs{\mathbf{s}}
\newcommand\bfx{\mathbf{x}}
\newcommand\bfX{\mathbf{X}}
\newcommand\bfy{\mathbf{y}}
\newcommand\surj{\text{surj}}
\newcommand\st{\text{ such that }}
\newcommand\AND{\text{ and }}
\newcommand\Magma{{\sc Magma}}
\newcommand\PC{\mathsf{PC}}
\newcommand\C{\mathsf{C}}
\DeclareMathOperator{\Pic}{Pic}
\DeclareMathOperator{\Red}{Red}
\DeclareMathOperator{\Cone}{Cone}
\DeclareMathOperator{\ev}{ev}
\DeclareMathOperator{\Span}{Span}
\DeclareMathOperator{\GL}{GL}
\DeclareMathOperator{\LM}{LM}
\DeclareMathOperator{\Conv}{Conv}
\newcolumntype{C}[1]{>{\centering\let\newline\\\arraybackslash\hspace{0pt}}m{#1}}
\newcommand\ps[2]{{\left\langle#1,#2\right\rangle}}
\title{Projective toric codes}
\author{Jade Nardi}
\address{INRIA \\ LIX CNRS UMR 7161 \\
	Ecole Polytechnique, 91128 Palaiseau Cedex, France \\
	\email{jade.nardi@inria.fr}}
\date{\today}
\begin{document}

	\maketitle
	
	\begin{abstract}
		Any integral convex polytope $P$ in $\R^N$ provides a $N$-dimensional toric variety $\bfX_P$ and an ample divisor $D_P$ on this variety. This paper gives an explicit construction of the algebraic geometric error-correcting code on $\bfX_P$ , obtained by evaluating global section of $\mathcal{L}(D_P)$ on every rational point of $\bfX_P$. This work presents an extension of toric codes analogous to the one of Reed-Muller codes into projective ones, by evaluating on the whole variety instead of considering only points with non-zero coordinates. The dimension of the code is given in terms of the number of integral points in the polytope $P$ and an algorithmic technique to get a lower bound on the minimum distance is described.
	\end{abstract}
	
\keywords{Error-correcting codes, Toric variety, Polytope, Algebraic geometric codes, Gr\"obner bases}

\ccode{Mathematics Subject Classification 2010: 94B05, 14M25, 14J20 , 52B20, 52B05 , 52B11, 13P10.}

\bigskip

		Introduced by J.P. Hansen \cite{HanToric}, toric codes consist in evaluating Laurent monomials $x_1^{m_1}x_2^{m_2}\dots x_n^{m_n}$ at points $(x_1,\dots,x_n) \in (\Fq^*)^n$ where $m=(m_1,\dots,m_n)$ runs through the integral points of a given lattice polytope $P \subset \R^n$. This simple definition hides the algebraic geometric nature of these codes. A lattice polytope $P$ defines a toric variety $\bfX_P$, which contains a dense torus $\T^n\simeq(\overline{\Fq}^*)^n$. The toric code associated to $P$, denoted here by $\C_P$, is the evaluation code of the global sections of the Riemann-Roch space of a divisor $D_P$ on $\bfX_P$ at the rational points of $\T^n$. 
		
		Theoretically, algebraic geometric codes have excellent parameters but their pratical use requires the computation of Riemman-Roch spaces. Arithmetic \cite{Volcheck,Hess} and geometric \cite{Goppa83,LBR,Hache,LGS} algorithms have been developed and refined to compute the Riemman-Roch space of a divisor $D$ on a curve $C$. However, the literature is undoubtably sparser on higher dimensional varieties. Toric varieties present the extremely convient property of having an explicit description of the set of their divisors and their Riemann-Spaces in combinatorial terms, in any dimension. This explains why the parameters of toric codes have been broadly investigated, on surfaces \cite{HanToric,Little,SS} and in higher dimension \cite{LittlemDim,Ruano}.
		
		\smallskip
		
		In this paper, we construct a linear code from the polytope $P$ by evaluating the global sections of the RiemannRoch space $L(D_P)$ at the $\Fq$-points of the whole variety $\bfX_P$, not only on the torus as in toric codes. In the present work, we call the code defined from the polytope $P$ the \emph{projective toric code} asociated to $P$ and we denote it by $\PC_P$. This termininology has be chosen because the difference between a toric code $\C_P$ and its projective version $\PC_P$ is comparable the one between a Reed-Muller code and the projective Reed-Muller code of the same degree \cite{Lachaud}. In the latter case, we evaluate \emph{homogeneous} polynomials in $n+1$ variables instead of polynomials in $n$ variables. Actually, toric varieties, like projective spaces, come with a homogenization process which turns Laurent polynomials of $k[t_1^\pm,\dots,t_n^\pm]$ into polynomials of the Cox ring $k[X_1,\dots,X_r]$ ($r > n$). The parameters of $\PC_P$ compared to those of $\C_P$ behave the same way as for (projective) Reed-Muller codes : for $q$ large enough, the dimension stays the same but the length and the minimum distance are increased.

		This work presents a generic framework of the study of algebraic codes on toric varieties, in which notably (weighted) projective Reed-Muller codes fit (see \cite{Lachaud,Sor} for parameters). As several toric codes are champion codes \cite{BK13b,BK13,L13}, extending them by evaluating outside the big torus is likely to provide new champion codes (see Sec. \ref{sec:good-codes}). 	Besides, this extension of toric codes will probably find applications in information theory. For instance, J.P. Hansen built a \emph{secret sharing scheme with strong multiplication} based on toric codes \cite{HanSecret} in which, as usual, the number of participants is bounded by the length of the code. Projective toric codes may provide similar secret sharing schemes with more participants and analogous techniques are likely to give the parameters of these schemes.

		When studying classical toric codes, the rational points and the global sections have a simple and explicit description. Here arises the problem of handling the $\Fq$-points of the abstract variety $\bfX_P$.
		Instead of embedding the toric variety into $\bfX_P$ into a possibly large projective space \cite{CN}, we focus on the case where the variety $\bfX_P$ is \emph{simplicial} and thus can be represented as a  \emph{good geometric quotient} of an open affine subset of $\A^r$ under the action of a group $G$, where $r$ is the number of facets of the polytope $P$. This standpoint proved to be effective for the study of Goppa codes on Hirzebruch surfaces \cite{nardi:code}. This not only provides a good grasp on the points of $\bfX_P$ but it also expresses the global sections of $L(D_P)$ as polynomials of $\Fq[X_1,\dots,X_r]$. To evaluate them at the rational points $\bfX_P$, we determine which orbits of $r$-tuples correspond to $\Fq$-points, choose a representative among each of these orbits and finally evaluate naively polynomials at these representatives. Note that a generator matrix can then be constructed thanks to Hermite Normal forms without any knowledge about toric varieties (see Section \ref{ExpConst}).

		The combinatorial properties of $P$ echo the geometric properties of the variety $\bfX_P$ and are related to the parameters of the toric codes $\C_P$ and $\PC_P$. Integral points of the polytope $P$ correspond to monomials forming a basis of the line bundle $L(D_P)$. Regarding the dimension, there is a correspondance between a basis of the toric code $\C_P$ and the lattice points of $P$ modulo $q-1$ \cite{Ruano}. Using the decomposition of $\bfX_P$ as a disjoint union of tori, one for each face of $P$, we prove here that reducing the lattice points of $P$ modulo $q-1$ \emph{face by face} provides a basis of $\PC_P$ (Th. \ref{DIM}).
	 		
		To bound the minimum distance from below, we aim to bound from above the number of zeroes in $\bfX_P$ of a global section $f \in L(D_P)$, as usual in the context of algebraic-geometric code. Here, we use a footprint bound technique \cite{GH,Beelen2019} generalizing the one set for codes on Hirzebruch surfaces \cite{nardi:code}. We use Gr\"obner basis theory to express a bound on the minimum distance in terms of integral points in a polytope (Th. \ref{DIST}), without suffering from the exponential growth in the number of variables of complexity of the actual computation of a Gröbner basis.
		
		Altogether, constructing the projective toric code $\PC_P$ and estimating its parameters heavily relies on the determination of lattice points of some polytopes. For a polytope of dimension $N$ of volume $V$ with $s$ vertices, this can be done in time $\tilde{O}\left(\left(s^{\lceil\frac{N}{2}\rceil} + V\right) \log \delta\right)$ where $\delta$ is the maximum modulus of the coordinates of the vertices of $P$ \cite[Prop. 3.5]{SV13}. This encourages a thourough study of (projective) toric codes assocaited to polygons before investigationg in higher dimension.

			\section{Toric variety from a polytope}
			
			This section sums up different definitions and properties of toric varieties defined by polytopes, from the reference book of D. Cox, J. B. Little, and H. K. Schenck \cite{CoxToric}.
			
			\subsection{Normal fan of $P$}
			
			Let $P \subset \R^N$ be a full dimensional convex lattice polytope. Throughout this paper, all polytopes are assumed \emph{convex}. We write $Q \preceq P$ if $Q$ is a face of $P$. For each ${k \in \{0,\dots, N\}}$, the set of $k$-dimensional faces of $P$ is denoted by $P(k)$. The faces of dimension $N-1$ are called \emph{facets} and those of dimension $0$ \emph{vertices}.

			\begin{definition}[Primitive normal vector]\label{def:InnerNormal}
				For each facet $F$ of $P$, denote by $u_F$ the shortest inner normal vector of $F$ with integer coordinates. We call $u_F$ the \emph{primitive normal vector} of the facet $F$.
			\end{definition}
			
			For every face $Q \preceq P$, we define the cone $\sigma_Q = \Cone(u_F \mid Q \preceq F ) \subset \R^N$. The union of these cones forms the \emph{normal fan} of $P$ defined as $\Sigma:= \bigcup_{Q \preceq P} \sigma_Q.$ \cite[Th. 2.3.2]{CoxToric}

			Such a fan defines an abstract toric variety $\bfX_P$, by gluing some affine spaces corresponding to these cones	\cite[Prop. 3.1.6]{CoxToric}.
			
			\begin{fact}\label{IsomVar}
				The toric varieties associated to two polytopes $P$ and $P'$ are isomorphic if and only if their normal fans are equal up to a unimodular transformation, \textit{i.e.} multiplication by a matrix of $\GL_N(\Z)$.
			\end{fact}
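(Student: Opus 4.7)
The strategy is to leverage the fact recalled above that $\bfX_P$ is defined from the normal fan $\Sigma_P$ alone, by gluing affine charts $U_\sigma$ associated to each cone $\sigma \in \Sigma_P$ \cite[Prop. 3.1.6]{CoxToric}. The statement therefore reduces to showing that the assignment $\Sigma \mapsto \bfX_\Sigma$ factors through the $\GL_N(\Z)$-orbit of $\Sigma$.

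For the implication $\Leftarrow$, assume $\phi \in \GL_N(\Z)$ satisfies $\phi(\Sigma_P) = \Sigma_{P'}$. The contragredient $\phi^\vee \in \GL_N(\Z)$ preserves the lattice $\Z^N$ and sends each monoid $\sigma^\vee \cap \Z^N$ bijectively onto $\phi(\sigma)^\vee \cap \Z^N$. This bijection of affine monoids induces an isomorphism of the associated monoid algebras, and hence of the affine charts $U_\sigma \cong U_{\phi(\sigma)}$. I would then check that these isomorphisms respect the gluing data along the open immersions $U_{\sigma \cap \tau} \hookrightarrow U_\sigma$ and $U_{\sigma \cap \tau} \hookrightarrow U_\tau$, a verification that amounts to naturality in $\sigma$. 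Gluing yields the desired isomorphism $\bfX_P \cong \bfX_{P'}$.

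For the converse $\Rightarrow$, the idea is to recover the fan intrinsically from the variety. The open torus $\T^N \subset \bfX_P$ can be characterized as the complement of the union of the torus-invariant prime divisors of $\bfX_P$, which correspond bijectively to the rays of $\Sigma_P$. An isomorphism $\bfX_P \cong \bfX_{P'}$ then, after composition with a suitable translation by the torus, restricts to a group isomorphism $\T^N \to \T^N$, hence by duality of algebraic tori to a lattice automorphism $\phi \in \GL_N(\Z)$. The orbit-cone correspondence between cones of $\Sigma_P$ and torus orbits of $\bfX_P$ is preserved under the isomorphism, forcing $\phi(\Sigma_P) = \Sigma_{P'}$.

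I expect the main obstacle to lie in this converse direction: the intrinsic identification of the torus inside an abstract variety isomorphic to $\bfX_P$, together with the transfer of the torus action across the isomorphism, relies on the description of the automorphism group of a toric variety due to Demazure and Cox, surveyed in \cite[Ch. 3]{CoxToric}. The forward direction, by contrast, is a purely formal manipulation of gluing data and should present no substantial difficulty beyond bookkeeping.
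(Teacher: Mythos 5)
Your direction $\Leftarrow$ is essentially correct: a unimodular $\phi$ with $\phi(\Sigma_P)=\Sigma_{P'}$ acts by its contragredient on the dual lattice, maps each monoid $\sigma^\vee\cap\Z^N$ onto $\phi(\sigma)^\vee\cap\Z^N$, hence gives compatible isomorphisms of the monoid algebras of the affine charts, and these glue. That is the standard argument and needs only the bookkeeping you describe. (Note also that the paper itself states this result as a Fact without proof, so there is no internal argument to compare your route against; it must be judged on its own merits.)

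The converse, as you have written it, has a genuine gap. First, describing the torus as ``the complement of the union of the torus-invariant prime divisors'' is not an intrinsic characterization: torus-invariance presupposes the torus you are trying to recover, so this cannot by itself transport the torus structure across an abstract isomorphism. Second, and more seriously, the claim that an abstract isomorphism $f:\bfX_P\to\bfX_{P'}$ becomes equivariant ``after composition with a suitable translation by the torus'' is false. Already for $P$ and $P'$ two standard simplices one has $\bfX_P\cong\bfX_{P'}\cong\PP^N$ and $f$ is an arbitrary element of $\mathrm{PGL}_{N+1}$; no composition with a diagonal (torus) translation carries the standard torus to the standard torus unless $f$ was monomial to begin with. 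What the argument actually requires is conjugacy of maximal tori in the connected automorphism group: one shows that the big torus is a maximal torus of $\mathrm{Aut}^0(\bfX_{P'})$ (Demazure in the smooth complete case, Cox in the simplicial complete case), observes that $f\,\T^N f^{-1}$ is another $N$-dimensional, hence maximal, torus, and then composes $f$ with an automorphism $\alpha\in\mathrm{Aut}^0(\bfX_{P'})$ --- in general not a torus translation --- so that $\alpha\circ f$ intertwines the two torus actions up to a group automorphism of $\T^N$, i.e.\ up to an element of $\GL_N(\Z)$; only at that point does the orbit--cone correspondence yield $\phi(\Sigma_P)=\Sigma_{P'}$. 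This is also exactly where completeness of the normal fan (automatic here, since $P$ is full dimensional, and needed for $\mathrm{Aut}^0$ to be an algebraic group) and passage to an algebraically closed field enter, neither of which appears in your sketch. You do name Demazure and Cox as the relevant tool, but the proof must genuinely run through the maximal-torus conjugacy argument; the ``translation'' shortcut is the step that would fail.
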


			\subsection{Simplicial toric variety as quotient}
			
			Let $\Fq$ be a finite field with $q$ elements, where $q$ is a prime power. \textbf{From now on, we assume that both of the following hypotheses hold.}
				
			\begin{enumerate}[label=(H\arabic*),left= 0pt]
				\item\label{H1} The polytope $P$ is simple: each vertex belongs to exactly $N$ facets.
			\end{enumerate}	f
			For each vertex $v \in P(0)$, let us define the $N$-square matrix $A(v)$ whose rows are the primitive normal vectors $u_F$ (Def. \ref{def:InnerNormal}) of the $N$ facets $F$ containing $v$.

			\begin{enumerate}[resume,label=(H\arabic*),left= 0pt]
				\item\label{H2}The determinant of $A(v)$ is coprime with the characteristic of the field $\Fq$.
				\end{enumerate}

			\begin{remark}\label{rk:H1}
				\begin{enumerate}
					\item \ref{H1} is only meaningful when $N\geq 3$. Every convex polygon is simple. In dimension $3$, cubes and tetrahedra are simple but square-based pyramids are not, as their top vertex belongs to $4$ faces.
					\item Note that \ref{H2} does not depend on the order of the rows of $A(v)$. Moreover, it only matters when the variety given by the polytope $P$ is not smooth, otherwise all these determinants are equal to $1$.
				\end{enumerate}
			\end{remark}

			These hypotheses ensure that the toric variety $\bfX_P$ is \emph{simplicial} and enable us to consider it as the quotient of an open affine subset modulo a group action. More precisely, we see $\bfX_P$ in the affine space $\A^r$, where $r$ is the number of facets of $P$. In the polynomial ring,
			\begin{equation}\label{CoxRing}
				R:=\Fq[X_F \mid \: F \in P(N-1)],
			\end{equation}
			we consider the \emph{irrelevant ideal} $B:=\langle \prod_{F \not\ni v} X_F \mid v \in P(0)\rangle$. We denote by $\mathcal{Z} \subset \A^r$ its set of zeroes. 
			
			\begin{theorem}{\cite[Prop. 5.1.9]{CoxToric}}
				The variety $\bfX_P$ is isomorphic to the quotient of $\A^r \setminus \mathcal{Z}$ under the action of the algebraic group 
				\begin{equation}\label{def:G2}
				G=\left\{(t_F)_{F \in P(N-1)}\in (\overline{\Fq}^*)^r : \forall \: j \in \{1\dots,N\}, \: \prod_{\mathclap{F \in P(N-1)}} t_F^{u_F^{(j)}} = 1\right\}
				\end{equation}
				where $u_F^{(j)}$ is the $j$-th coordinate of the vector $u_F\in \Z^N$, with the following action:
				\[ \left\{\begin{array}{rccl}
					(\A^r \setminus \mathcal{Z})\times G& \rightarrow &\A^r \setminus \mathcal{Z}&\\
					(\bfx, \bft) & \mapsto & \bft \cdot \bfx & \hspace*{-0.6em}:= (t_Fx_F)_{F \in P(N-1)}.
				\end{array}\right.\]
			\end{theorem}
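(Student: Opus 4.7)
The plan is to realize this as the standard \emph{Cox quotient} construction for a simplicial toric variety, with the polytope hypotheses \ref{H1}--\ref{H2} ensuring that the quotient is \emph{geometric} and not merely categorical. I would proceed in three stages.

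First, I would identify $G$ intrinsically as the Cartier dual of the class group of $\bfX_P$. The short exact sequence
$$0 \longrightarrow M \longrightarrow \Z^r \longrightarrow \mathrm{Cl}(\bfX_P) \longrightarrow 0,$$
in which $M \simeq \Z^N$ is the character lattice of the dense torus and the left-hand map sends $m$ to $(\ps{m}{u_F})_{F \in P(N-1)}$, dualizes under $\Hom(-,\overline{\Fq}^*)$ to
$$1 \longrightarrow G \longrightarrow (\overline{\Fq}^*)^r \longrightarrow \T^N \longrightarrow 1.$$
Testing the membership condition on the standard basis of $M$ recovers exactly the product relations $\prod_F t_F^{u_F^{(j)}} = 1$ defining $G$ in \eqref{def:G2}.

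Next, I would construct the quotient map patch by patch on the affine cover of $\bfX_P$ indexed by the vertices of $P$. For each $v \in P(0)$, by \ref{H1} the primitive normals $\{u_F : v \in F\}$ form a basis of $\R^N$, so the matrix $A(v)$ is invertible and its inverse produces the generators of the semigroup $\sigma_v^\vee \cap M$. These generators lift canonically to $G$-invariant Laurent monomials in the $X_F$ which, after clearing denominators against the variables $X_F$ with $v \notin F$, define a morphism from $\{\bfx \in \A^r \mid X_F(\bfx) \neq 0 \text{ for } v \notin F\}$ onto the affine chart $U_v$ of $\bfX_P$. A linear-algebra check on overlaps $U_v \cap U_{v'}$, in which the two local formulas differ by a monomial lying in the kernel of the evaluation of $G$, lets them patch into a global surjective $G$-invariant morphism $\pi : \A^r \setminus \mathcal{Z} \to \bfX_P$.

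The main obstacle is verifying that $\pi$ is a \emph{geometric} quotient, which is where both hypotheses come into play. The simplicial condition \ref{H1} makes each $G$-orbit closed of the expected dimension $r-N$: since the rows of $A(v)$ span $\R^N$, the projection of $G$ onto the $r-N$ coordinates indexed by facets not containing $v$ has finite kernel, so each orbit in the chart is a translate of a subtorus. Hypothesis \ref{H2}, that $\det A(v)$ is coprime with the characteristic, ensures that this finite stabilizer is \emph{étale}, and hence the quotient exists scheme-theoretically as a variety even in positive characteristic; without it one could only hope for a coarse space. Matching the face stratifications---each face of $P$ indexing both a torus orbit of $\bfX_P$ and a $G$-invariant locally closed subset of $\A^r \setminus \mathcal{Z}$ determined by the vanishing pattern of the $X_F$---then yields bijectivity on orbits and completes the identification.
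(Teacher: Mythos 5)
Your sketch is correct and is essentially the argument standing behind the statement as the paper uses it: the paper offers no proof of its own, quoting Prop.~5.1.9 of Cox--Little--Schenck and adding only the remark that in positive characteristic the result follows from the reductivity of $G$, which is guaranteed by (H2). Your three stages --- dualizing the divisor class sequence to identify $G$, building the quotient map chart by chart over the vertex charts, and using the simplicial hypothesis (H1) plus the coprimality hypothesis (H2) to make the quotient geometric with (\'etale) finite stabilizers --- are precisely the standard Cox quotient construction that the citation points to, with your last stage spelling out explicitly the role of (H2) that the paper's remark delegates to the reductivity of $G$.
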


			\begin{remark}
				Proposition 5.1.9 \cite{CoxToric} only holds in characteristic zero. It follows from the reductivity of the group $G$ \cite{guilbot2014}, which is guaranteed here by \ref{H2}.
			\end{remark}
			
			In this presentation of $\bfX_P$, rational points correspond to Frobenius-invariant $G$-orbits, \textit{i.e.} elements $\bfx \in \A\setminus \mathcal{Z}$ such that there exists $\bft \in G$ satisfying $\bfx^q=\bft \cdot \bfx$.

			\subsection{Cox ring and polynomials}
			
			Beside an elegant description of the abstract variety $\bfX_P$, the quotient representation provides a polynomial coordinate ring to the variety $\bfX_P$ as the ring $R$ (Eq. \eqref{CoxRing}) endowed with a grading by the Picard group of $\bfX_P$.
			
			\subsubsection{Picard group of $\bfX_P$}	
			
			The Picard group of the variety $\bfX_P$, denoted by  $\Pic \bfX_P$, is the set of its divisors modulo linear equivalence. We denote by $[D]\in \Pic \bfX_P$ the Picard class of a divisor $D$ on $\bfX_P$.
			The $r$ facets $F \in P(N-1)$ give $r$ torus-invariant prime divisors $D_F$ on $\bfX_P$. Their classes $[D_F]$ generate $\Pic \bfX_P$ as $\Z$-module \cite[Th. 4.1.3]{CoxToric}.
		
			More precisely, 
			\[\Pic \bfX_P \simeq \Z^{r-N} \oplus \bigoplus_{i=1}^N \faktor{\Z}{d_i \Z}\]
			where $1 \leq d_1 \mid d_2 \mid \dots \mid d_N$ are the \emph{invariant factors} of the $r \times N$-matrix $A_P$ whose rows are the primitive normal vectors $u_F$. If $d_i=0$, then $\faktor{\Z}{d_i \Z}=\{0\}$.

			\subsubsection{Cox ring}\label{SecCoxRing}
			
			In the polynomial ring $R=\Fq[x_F \mid F \in P(N-1)]$, we define the \emph{degree} of a monomial $M=\prod_{F \in P(N-1)} X_F^{\alpha_F}$ as the Picard class $\left[\sum a_F D_F\right] \in\Pic \bfX_P$.
			Denote by $R_\alpha$ the $\Fq$-vector space spanned by monomials of degree $\alpha \in \Pic \bfX_P$. Then
			\[R=\bigoplus_{\mathclap{\alpha \in \Pic \bfX_P}} R_\alpha.\]
			
			\begin{fact}\cite[Th 5.4.1]{CoxToric}
				The component $R_\alpha$ is isomorphic to the Riemann-Roch space $L(D)=H^0(\bfX_P, \mathcal{O}(D))$ of any divisor such that $[D]=\alpha$.
			\end{fact}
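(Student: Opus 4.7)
The plan is to build an explicit $\Fq$-linear isomorphism $\Phi_D : H^0(\bfX_P, \mathcal{O}(D)) \to R_\alpha$ by matching the character functions $\chi^m$ appearing in the classical polytopal description of $L(D)$ with monomials in the Cox ring, and then to verify the result is independent of the chosen representative $D$ of $\alpha$.

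First I would pick a torus-invariant representative $D = \sum_{F \in P(N-1)} a_F D_F$ of $\alpha$; such a choice exists because the classes $[D_F]$ generate $\Pic \bfX_P$. Next I would invoke the classical description
\[
H^0(\bfX_P, \mathcal{O}(D)) = \bigoplus_{m \in P_D \cap \Z^N} \Fq \cdot \chi^m,
\]
where $P_D = \{m \in \R^N \mid \langle m, u_F \rangle \geq -a_F \text{ for all } F \in P(N-1)\}$, and define the candidate map by $\chi^m \mapsto \prod_F X_F^{\langle m, u_F \rangle + a_F}$. The exponents are nonnegative integers precisely because $m \in P_D$, and the image lies in $R_\alpha$ since $\mathrm{div}(\chi^m) = \sum_F \langle m, u_F \rangle D_F$ is principal and therefore does not alter the Picard class.

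Injectivity is immediate from the linear independence of distinct monomials. For surjectivity, given $\prod_F X_F^{b_F} \in R_\alpha$, the divisor $\sum_F (b_F - a_F) D_F$ has trivial class in $\Pic \bfX_P$, so by the defining exact sequence $\Z^N \to \Z^r \to \Pic \bfX_P \to 0$, where the first arrow sends $m$ to $(\langle m, u_F \rangle)_F$, there exists $m \in \Z^N$ with $b_F - a_F = \langle m, u_F \rangle$ for every $F$, and nonnegativity of the $b_F$ forces $m \in P_D$.

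Finally, if $D' = D + \mathrm{div}(\chi^{m_0})$, multiplication by $\chi^{m_0}$ furnishes an isomorphism $H^0(\bfX_P,\mathcal{O}(D')) \to H^0(\bfX_P,\mathcal{O}(D))$ intertwining $\Phi_D$ and $\Phi_{D'}$, which yields the required independence. The main subtlety I anticipate lies in the simplicial but possibly singular setting: the Cox-ring grading is most naturally indexed by the divisor class group, so some care is needed to identify the component $R_\alpha$ with $L(D)$ when $\alpha$ is a genuine Picard class. It is precisely hypothesis \ref{H2} that keeps the quotient presentation of $\bfX_P$ well-behaved enough for the exact sequence and the bijection above to go through as in the smooth case.
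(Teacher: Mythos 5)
The paper does not prove this statement at all: it is quoted as a Fact from \cite[Th.~5.4.1]{CoxToric}, so there is no internal proof to compare against. Your argument is essentially the standard proof from that reference, and it is correct: the exact sequence $\Z^N \to \Z^r \to \Pic\bfX_P \to 0$ (which holds here because $P$ is full dimensional, so the $u_F$ span $\R^N$) gives exactly the surjectivity step, the polytopal description of $H^0(\bfX_P,\mathcal{O}(D))$ for a torus-invariant Weil divisor gives the source, and multiplication by a character handles independence of the representative. Two small points deserve correction. First, for injectivity you should note explicitly that distinct $m, m' \in P_D\cap\Z^N$ yield distinct exponent vectors $(\ps{m}{u_F}+a_F)_F$; this again uses that the $u_F$ span $\R^N$, not merely linear independence of monomials. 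Second, the ``subtlety'' you raise at the end is a red herring: in this paper $\Pic\bfX_P$ is \emph{defined} as divisors modulo linear equivalence, i.e.\ it is the divisor class group that grades the Cox ring, so there is no Cartier-versus-Weil discrepancy to reconcile, and hypothesis \ref{H2} plays no role in this isomorphism --- it is needed only for the reductivity of $G$ and the quotient presentation of $\bfX_P$ in positive characteristic, whereas the identification $R_\alpha \simeq L(D)$ is purely combinatorial and valid over any field.
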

			
			Riemann-Roch spaces come with a handy depiction thanks to a correspondence between effective divisors and polytopes. For a divisor $D=\sum \alpha_F D_F$, we define a polytope $P_D$ as follows:
			\begin{equation}\label{polydiv}
				P_D=\left\{m \in \R^N \mid \forall \: F \in P(N-1), \: \ps{m}{u_F} \geq -\alpha_F\right\}.
			\end{equation}
			For $m \in P_D$, set $\chi^\ps{m}{P_D}$ the monomial in $R$ defined by
			\begin{equation}\label{monhom}
				\chi^\ps{m}{P_D}=\prod X_F^{\ps{m}{u_F}+\alpha_F}.
			\end{equation}
			Then $L(D)=\Span\left\{\chi^\ps{m}{P_D}, \: m \in P_D \cap \Z^N\right\}$.

			\section{Explicit construction of the projective toric code associated to a polytope}	
			
			\subsection{Definition of $\PC_P$}

			\begin{definition}\label{DefCode}
				Let $P$ be a lattice polytope and $\bfX_P$ its corresponding toric variety. Call $D$ the divisor such that $P=P_D$ \eqref{polydiv}.
				Choose a set $\mathcal{P}$ of representatives of $\Fq$-rational points of $\bfX_P$ as points of $\A^r$. We define the \emph{projective toric code} associated to $P$, that we denote by $\PC_P$, as the image of the evaluation map
				\[\ev_P:\left\{\begin{array}{rcl}
					L(D) & \rightarrow & \Fq^{n}\\
					\chi^\ps{m}{P} & \mapsto & \left(\chi^{\ps{m}{P}}(\bfx)\right)_{\bfx \in \mathcal{P}}
				\end{array}\right.\] 
				where $n:=\#X_P(\Fq)$ is given in \eqref{eq:NbPts}.
			\end{definition}
			
			\begin{remark}
				One can easily check that, given two $r$-tuples $\bfx$, $\bfx'$ such that there exists $\bft \in G$ satisfying $\bfx'=\bft \cdot \bfx$, we have
				\[\chi^{\ps{m}{P}}(\bfx')= \: \prod_{\mathclap{F \in P(N-1)}} t_F^{\alpha_F}\times \chi^{\ps{m}{P}}(\bfx).\]
				Therefore, taking a different set a of representatives of the $\Fq$-points of $\bfX_P$ gives an Hamming-equivalent code.
			\end{remark}

			\paragraph{Why \emph{projective} toric codes?} The classical toric code \cite{HanToric} associated to a polytope $P$ is defined by
			\begin{equation}\label{def:toric-codes}
				\C_P= \Span \{(\chi^m(\bft) \mid \bft \in (\Fq^*)^N), \: m \in P\cap \Z^N \}, \text{ where } \chi^m(t_1,\dots,t_N)=\prod_{i=1}^N t_i^{m_i}.
			\end{equation}
			It is the evaluation of some regular functions $\chi^m$ on the dense torus $\T^N$, called \emph{characters}. To get from toric codes to projective ones, we \emph{homogenize} these characters into the monomials $\chi^\ps{m}{P}$ \eqref{monhom} in the Cox ring $R$, which we evaluate at every $\Fq$-rational point of $\bfX_P$, not only at points on the torus. This process is actually the same that turns a Reed-Muller code into a projective one, hence the terminology \emph{projective} toric codes chosen here.

			\subsection{(Rational) points of $\bfX_P$}
			
			The main issue arising from Def. \ref{DefCode} when handling the code $\PC_P$ is the determination of a set $\mathcal{P}$ of representatives of the $\Fq$-rational points of $\bfX_P$ in its quotient representation. For instance, in the projective space $\PP^N$, we are used to represent a point by a $(N+1)$-tuple with its far-left (or far-right) non-zero coordinate equal to $1$. We aim to generalize this on other simplicial toric varieties by choosing representatives in a \emph{normalized form}, \textit{i.e.} as tuples with some determined zero coordinates and as many coordinates as possible equal to 1. Let us first fix some notations.

			\begin{notation}\label{PowMat}
				Let $A$ be a matrix of size $\ell\times m$ with integer entries. For any $\ell$-tuple $\bft$ with coordinates in a field $\K$, we write $\bft^A$ the $m$-tuple whose $j$-th coordinate of $\bft^A$ is equal to $\prod_{i=1}^\ell t_i^{a_{ij}}$.
			\end{notation}
			
			For any couple of matrices $A, \: B$ with compatible sizes, we have $(\bft^A)^B=\bft^{AB}$.
			
			This notation and Eq. \eqref{def:G2} gives a new definition of the group $G$:
			\begin{fact}\label{fact:G}
				Set $\bfun_N=(1,\cdots,1) \in \Z^N$. The group $G$ is the preimage of $\{\bfun_N\}$ under the map $\bft \in (\overline{\Fq}^*)^r \mapsto\bft^{A_P}\in(\overline{\Fq}^*)^N$.
			\end{fact}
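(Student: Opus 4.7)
The plan is to unpack Notation \ref{PowMat} and match it directly with the defining equation \eqref{def:G2} of $G$. Recall that $A_P$ is the $r\times N$ matrix whose rows are indexed by the facets $F \in P(N-1)$ and whose $(F,j)$-entry is $u_F^{(j)}$. Applying Notation \ref{PowMat} to an $r$-tuple $\bft=(t_F)_{F \in P(N-1)}\in(\overline{\Fq}^*)^r$ therefore produces an $N$-tuple $\bft^{A_P}$ whose $j$-th coordinate is, by definition, $\prod_{F \in P(N-1)} t_F^{u_F^{(j)}}$.

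Next I would rewrite the membership condition for $G$. By \eqref{def:G2}, $\bft \in G$ iff, for every $j \in \{1,\dots,N\}$, the product $\prod_{F \in P(N-1)} t_F^{u_F^{(j)}}$ equals $1$; combining this with the preceding identification shows that $\bft \in G$ iff every coordinate of $\bft^{A_P}$ equals $1$, i.e.\ $\bft^{A_P}=\bfun_N$. This says exactly that $\bft$ lies in the preimage of $\{\bfun_N\}$ under the map $\bft \mapsto \bft^{A_P}$, proving the fact.

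Since the statement is essentially a change of notation, there is no real obstacle. The only point requiring a bit of bookkeeping is to keep straight that the rows of $A_P$ are indexed by the $r$ coordinates of $\bft$ and the columns by the $N$ ambient coordinates, so that the shape $r\times N$ is compatible with the map $(\overline{\Fq}^*)^r \to (\overline{\Fq}^*)^N$, $\bft \mapsto \bft^{A_P}$. Once this matching is stated, the equivalence follows by reading off the $j$-th coordinate, and the proof is a single line.
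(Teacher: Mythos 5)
Your proof is correct and matches the paper, which states this Fact as an immediate consequence of Notation \ref{PowMat} and Eq. \eqref{def:G2} without further argument: the $j$-th coordinate of $\bft^{A_P}$ is by definition $\prod_{F} t_F^{u_F^{(j)}}$, so $\bft^{A_P}=\bfun_N$ is exactly the membership condition defining $G$. Your bookkeeping of the $r\times N$ shape of $A_P$ is the only content needed, and you handle it correctly.
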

			
			While handling $\bfX_P$ as a quotient space under the action of $G$, we will be often led to exhibit the existence of preimages under  some maps $\bft \mapsto \bft^A$ for some square matrices $A$ by using the following lemma.
			
			\begin{lemma}\label{solsys}
				Let $A$ be a square integer matrix of size $\ell$. If the determinant of $A$ is coprime to the characteristic of the field $\K$, then the map
				\[
				\left\{\begin{array}{rcl}
					\overline{\Fq}^\ell & \rightarrow & \overline{\Fq}^\ell \\	
					\bft & \mapsto  & \bft^A
				\end{array}\right.
				\]
				is surjective.
				
			\end{lemma}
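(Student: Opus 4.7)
The plan is to reduce the general surjectivity to the diagonal case via Smith Normal Form. Since $\det A$ is coprime to the characteristic of $\K$, in particular $\det A \neq 0$, and one can write $A = UDV$ with $U,V \in \GL_\ell(\Z)$ and $D = \diag(d_1,\ldots,d_\ell)$ whose diagonal entries are nonzero integers satisfying $|d_1 \cdots d_\ell| = |\det A|$.

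First I would observe that the maps $\bft \mapsto \bft^U$ and $\bft \mapsto \bft^V$ are bijections of $(\overline{\Fq}^*)^\ell$: their inverses $\bfs \mapsto \bfs^{U^{-1}}$ and $\bfs \mapsto \bfs^{V^{-1}}$ are well-defined since $U^{-1}, V^{-1} \in \GL_\ell(\Z)$. Using the identity $(\bft^A)^B = \bft^{AB}$ recorded just before Fact \ref{fact:G}, the map $\phi_A : \bft \mapsto \bft^A$ factors as $\phi_V \circ \phi_D \circ \phi_U$, so its surjectivity reduces to that of $\phi_D : (t_1,\ldots,t_\ell) \mapsto (t_1^{d_1},\ldots,t_\ell^{d_\ell})$.

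For the diagonal case, given any target $(s_1,\ldots,s_\ell) \in (\overline{\Fq}^*)^\ell$, it suffices to produce for each $i$ some $t_i \in \overline{\Fq}^*$ with $t_i^{d_i} = s_i$. Since $d_i \neq 0$ and $\overline{\Fq}$ is algebraically closed, such a $t_i$ exists; equivalently, the multiplicative group $\overline{\Fq}^*$ is divisible. Concatenating these choices yields the desired preimage.

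I do not anticipate a genuine obstacle. One minor point of vigilance is that the lemma formally quantifies over $\overline{\Fq}^\ell$, whereas $\bft^A$ is ill-defined at tuples with a zero coordinate whenever $A$ has a negative entry; the lemma is however only invoked on the torus $(\overline{\Fq}^*)^\ell$ (compare Fact \ref{fact:G}), so this ambiguity is harmless. The coprimality of $\det A$ with the characteristic is actually a touch stronger than bare surjectivity demands, but it is the natural form in which \ref{H2} enters and it further guarantees the separability of each $X^{d_i} - s_i$, which would be handy should one later want to control fiber sizes.
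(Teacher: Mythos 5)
Your argument is correct, but it takes a different route from the paper. The paper's proof is a one-line application of the adjugate: since $A^{-1}$ is the transpose of the cofactor matrix (an integer matrix) divided by $\det A$, a preimage of $\bfs$ is obtained by first extracting a $\det(A)$-th root of each coordinate and then raising to the adjugate, so the whole problem reduces to a single kind of root extraction of order $\det A$. You instead invoke the Smith Normal Form $A=UDV$, use that $\phi_U,\phi_V$ are bijections of the torus because $U^{-1},V^{-1}\in\GL_\ell(\Z)$, and reduce to the diagonal map, extracting roots of the orders $d_1,\dots,d_\ell$ (the invariant factors). Both reductions are valid and both ultimately rest on divisibility of $\overline{\Fq}^*$; the paper's adjugate argument is more elementary (no normal-form theorem needed), while your SNF factorization is slightly more informative: it makes visible that surjectivity onto the torus needs only $\det A\neq 0$, and it identifies the fibers as cosets of the product of $d_i$-torsion subgroups, which is indeed where coprimality with the characteristic would start to matter (e.g.\ for counting or separability). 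Your two side remarks are also accurate and apply equally to the paper's own proof: the map is only well defined on $(\overline{\Fq}^*)^\ell$ when $A$ has negative entries, and the lemma is only ever invoked there (cf.\ Fact \ref{fact:G} and Lem.\ \ref{RepOrbit}), and the coprimality hypothesis \ref{H2} is stronger than bare surjectivity over an algebraically closed field requires.
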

			
			\begin{proof}
				The inverse of $A$ is equal to the transpose of its cofactor matrix, whose entries are integers, divided by its determinant. Therefore, finding a solution of $\bft^A=\bfs$ is possible if we are able to find roots of order $\det(A)$ of any element of $\overline{\Fq}$, which is possible if the determinant of $A$ and the characteristic of $\Fq$ are coprime.
			\end{proof}

			\subsubsection{Local coordinates}
			
			The toric vatiety $\bfX_P$ is covered by $\#P(0)$ affine toric charts corresponding to the vertices of the polytope $P$: given a vertex $v \in P(0)$, define the affine open set of $\bfX_P$
			\begin{equation}\label{eq:U_v}
				\mathcal{U}_{v}:=\{ x_F\neq 0 \mid F \not\ni v\} \subset \A^r \setminus \mathcal{Z}.
			\end{equation}

			\begin{lemma}\label{RepOrbit}
				The $G$-orbit of a geometric point of $\mathcal{U}_{v}$ contains a $r$-tuple $(x_F)_{F \in P(N-1)}$ such that $x_F = 1$ for $F \not\ni v$.
			\end{lemma}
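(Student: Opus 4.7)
The plan is to construct an element $\bft \in G$ explicitly so that $\bft \cdot \bfx$ has the desired normalized form. Since $\bfx \in \mathcal{U}_{v}$ means $x_F \neq 0$ for every facet $F \not\ni v$, the prescription $t_F := x_F^{-1}$ for the $r - N$ facets not containing $v$ immediately forces $(\bft \cdot \bfx)_F = t_F x_F = 1$ on those coordinates. What remains is to choose the $N$ values $(t_F)_{F \ni v}$ so that the resulting $r$-tuple $\bft$ actually lies in $G$.

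By Fact \ref{fact:G}, lying in $G$ means $\bft^{A_P} = \bfun_N$. Splitting the matrix $A_P$ according to whether a facet contains $v$ or not, and using Hypothesis \ref{H1} which ensures that exactly $N$ facets contain $v$ (so that $A(v)$ is a square $N \times N$ matrix), the membership condition factors as
\[
(t_F)_{F \ni v}^{A(v)} \cdot (x_F^{-1})_{F \not\ni v}^{A(v)^{c}} = \bfun_N,
\]
where $A(v)^{c}$ is the submatrix of $A_P$ built from the rows $u_F$ with $F \not\ni v$. Rearranging, the problem is reduced to solving $(t_F)_{F \ni v}^{A(v)} = \bfs$ where $\bfs \in (\overline{\Fq}^*)^N$ is the explicit vector obtained from the known coordinates $(x_F)_{F \not\ni v}$.

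To finish, I would invoke Hypothesis \ref{H2}: the determinant of $A(v)$ is coprime with the characteristic of $\Fq$, so Lemma \ref{solsys} applies and guarantees the surjectivity of $\bft \mapsto \bft^{A(v)}$ on $\overline{\Fq}^*$-tuples. This provides the missing $N$ values $(t_F)_{F \ni v}$, completing the construction of $\bft \in G$ with the required property.

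There is no real obstacle—the argument is a clean linear-algebraic decomposition. The only point requiring care is to verify that the condition $\bft^{A_P} = \bfun_N$ splits cleanly into a part determined by the prescribed values $t_F = x_F^{-1}$ (for $F \not\ni v$) and a square system over the remaining unknowns; this is precisely what \ref{H1} guarantees, and \ref{H2} then ensures the square system is solvable via Lemma \ref{solsys}.
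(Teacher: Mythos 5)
Your argument is correct and is essentially the paper's own proof: prescribe $t_F = x_F^{-1}$ for the facets $F \not\ni v$, observe that membership in $G$ then reduces (via \ref{H1}, which makes $A(v)$ square) to a system $(t_F)_{F \ni v}^{A(v)} = \bfs$ with explicit right-hand side, and solve it using \ref{H2} together with Lemma \ref{solsys}. No meaningful difference from the paper's route.
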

			
			\begin{proof}
				Fix $\bfx=(x_F) \in \mathcal{U}_{v}$. Let us prove there exists $\bft \in G$ such that $t_F=x_F^{-1}$ if $F \not\ni v$. By Eq. \eqref{def:G2}, we can find such a $r$-tuple $\bft=(t_F)_{F \in P(N-1)}$ in $G$ if there exists a $N$-tuple $(t_F)_{F \ni v}$ such that
				\begin{equation}\label{eqlc}
					\forall \: j \in \{1,\dots,N\}, \: \prod_{F \ni v} t_F^{u_F^{(j)}} = \prod_{F \not\ni v} x_F^{u_F^{(j)}}.
				\end{equation}		
				Take $A(v)$ the $N$-square matrix whose rows are the vectors $u_F$ of facets $F \ni v$ (see \ref{H1}). We can solve Eq. \eqref{eqlc} by finding a $N$-tuple $\tilde{\bft}$ satisfying $\tilde{\bft}^{A(v)}=\bfy$ where $\bfy$ is the vector formed by the $N$ right-hansides of \eqref{eqlc}, which is possible by Lem. \ref{solsys} and \ref{H2}.
			\end{proof}
			
			\subsubsection{Normalized forms}
			
			The action of the big torus $\T^N$ splits the toric variety $\bfX_P$ into a disjoint union of finitely many {$\T^N$-orbits}, each orbit itself being an algebraic torus:
			\begin{equation}\label{union}
				\bfX_P= \bigsqcup_{Q \preceq P} \T_Q,
			\end{equation}
			where $\T_Q \simeq (\overline{\K}^*)^{\dim Q}$ \cite[Th. 3.2.6]{CoxToric}. The torus $\T_Q$ is well-described in the affine set $\A^r \setminus \mathcal{Z}$: it is the set of points whose coordinates $x_F$ are zero for the facets $F$ containing $Q$. It is included in any affine chart $\mathcal{U}_{v}$ (see Eq. \eqref{eq:U_v}) associated to a vertex $v$ of $Q$ and it is $G$-invariant.
			
			We shall give the points of $\bfX_P$ by determining those on the tori $\T_Q$ as $r$-tuples in $\A^r \setminus \mathcal{Z}$ up to the action of $G$.
			
			\begin{definition}
				Let $Q \preceq P$ be a face of the polytope $P$ and $v \in P(0)$ a vertex of $Q$. A \emph{$v$-normalized representative of a point of $\T_Q$} is a $r$-tuple $\bfx \in \mathcal{U}_{v}$ such that $x_F=1$ for $F \not\ni v$ and $x_F=0$ for $F \supseteq Q$.
				
				We will call $\bfx$ a \emph{normalized representative} of a point $p$ if $Q$ and/or $v$ need not to be specified.
			\end{definition}

			From this definition, some points may have several $v$-normalized representatives, as illustrated in Ex. \ref{ex:bad}. But we will describe the normalized tuples that represent the same point by using Fact \ref{fact:G}.
			
			\begin{figure}[h]
					\centering
			\includegraphics{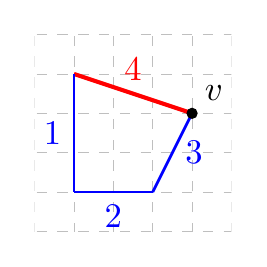}
			\caption{Polygon $P=\Conv((0,0),(2,0),(3,2),(0,3))$.}\label{fig:pol-ex}
			\end{figure}
			
			\begin{example}\label{ex:bad}
				The four inner normal vectors of the quadrilateral $P$ of Fig. \ref{fig:pol-ex} are
		$u_{F_1}=(1,0)$ $u_{F_2}=(0,1)$, $u_{F_3}=(-2,1)$ and $u_{F_4}=(-1,-3)$. The determinants of the matrices $A(v)$ defined in \ref{H2} are $1$, $2$, $7$ and $3$. Let us work on a finite field $\F_q$ with caracteristic different from $2$, $3$ and $7$.
			
				Call $v$ the vertex $(3,2)$. A $v$-normalized representative of a point $p$ on $\T_{F_4}$ has the form $(1,1,x,0)$, since $v$ does not lie on the edges $F_1$ and $F_2$. 
				
				Take $t \in  \overline{\Fq}$ such that $t^7=1$ and $t^3\neq 1$, which is possible by assumption on the characteristic. Then $\bft=(1,1,t^3,t)$ belongs to the group 
				\[G=\{(t_1,t_2,t_3,t_4) \in (\overline{\K}^*)^ 4 \mid t_1=t_3^2 t_4 \text{ and } t_2t_3=t_4^3\},\] which implies that $(1,1,x,0)$ and $(1,1,t^3 x, 0)$ are both some $v$-normalized representatives of a same point of $\T_{F_4}$.
								
			\end{example}

			\subsubsection{Orbits and Hermite normal form}\label{SecHermite}

			\begin{notation}\label{Not}
				Fix a $k$-dimensional face $Q$ of $P$ and a vertex $v$ of $Q$. Since the polytope $P$ is simple (H\ref{H1}), we can number the facets $F_1,\dots,F_r$ of $P$ so that
				\begin{itemize}
					\item $P(v)=\{F_1,\dots,F_N\}$,
					\item $\displaystyle Q = \bigcap_{i=1}^{N-k} F_i$.
				\end{itemize} 
				
				Set $A(v,Q)$ the matrix whose rows are the primitive normal vectors $u_{F_i}$ for ${i \in \{1,\dots,N\}}$. Let $H(v,Q)$ be the \emph{lower triangular Hermite Normal Form} (HNF) of $A(v,Q)$: there exists $T(v,Q) \in \GL_N(\Z)$ that satisfies 
				\begin{equation}\label{hermite}
					A(v,Q) T(v,Q)=H(v,Q).
				\end{equation}
				Let us write the matrix $H(v,Q)$ by blocks as follows:
				\begin{equation}\label{hermiteblocs}
					H(v,Q)=\begin{pmatrix}
						L_1(v,Q) & 0 \\
						B(v,Q) & L_2(v,Q)
					\end{pmatrix}
				\end{equation}
				
				where $L_1(v,Q)$ and $L_2(v,Q)$ are lower triangular square matrices of size $N-k$ and $k$ respectively.
			\end{notation}
			
			\begin{lemma}\label{2rep}
				Using Notations \ref{Not} and \ref{PowMat}, two $r$-tuples $\bfx$ and $\bfx'$ are some $v$-normalized forms of the same point on $\T_Q$ if and only if
				\[\left(x_{F_{N-k+1}},\dots,x_{F_N}\right)^{L2(v,Q)}=\left(x'_{F_{N-k+1}},\dots,x'_{F_N}\right)^{L2(v,Q)}.\]
			\end{lemma}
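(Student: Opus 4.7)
The plan is to encode the condition ``$\bfx$ and $\bfx'$ represent the same point of $\T_Q$'' as an equation on group elements of $G$, then decouple it using the block structure of the Hermite normal form \eqref{hermiteblocs}.

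First I would work out what it means for $\bfx' = \bft \cdot \bfx$ for some $\bft = (t_F) \in G$, where both tuples are $v$-normalized. On the coordinates indexed by $F \not\ni v$ (that is $F_{N+1},\dots,F_r$), we have $x_F = x'_F = 1$, forcing $t_F = 1$. On the coordinates $F_1,\dots,F_{N-k}$ containing $Q$, both sides vanish, so nothing is imposed on $t_{F_1},\dots,t_{F_{N-k}}$. Hence the condition $\bft \in G$ reduces, via Fact~\ref{fact:G}, to $\bft_0^{A(v,Q)} = \bfun_N$ for the truncated tuple $\bft_0 = (t_{F_1},\dots,t_{F_N}) \in (\overline{\Fq}^*)^N$. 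Applying the unimodular transformation $T(v,Q)$ from \eqref{hermite}, and using that $\bfs \mapsto \bfs^{T(v,Q)}$ is a bijection of $(\overline{\Fq}^*)^N$, this is equivalent to $\bft_0^{H(v,Q)} = \bfun_N$. Splitting $\bft_0 = (\bft_0', \bft_0'')$ with $\bft_0' \in (\overline{\Fq}^*)^{N-k}$ and $\bft_0'' \in (\overline{\Fq}^*)^k$, the block decomposition \eqref{hermiteblocs} yields
\[\bft_0'^{\,L_1(v,Q)} \cdot \bft_0''^{\,B(v,Q)} = \bfun_{N-k} \quad \text{and} \quad \bft_0''^{\,L_2(v,Q)} = \bfun_k.\]

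For the direct implication, I would use that the remaining coordinate equalities $x'_{F_i} = t_{F_i} x_{F_i}$ for $i = N-k+1,\dots,N$ mean $(x'_{F_{N-k+1}},\dots,x'_{F_N}) = \bft_0'' \cdot (x_{F_{N-k+1}},\dots,x_{F_N})$ in the coordinate-wise product. Raising to $L_2(v,Q)$ and using multiplicativity of $\bfs \mapsto \bfs^{L_2(v,Q)}$ together with $\bft_0''^{\,L_2(v,Q)} = \bfun_k$ gives the desired equality. For the converse, assume the displayed equation holds and define $\bft_0'' := (x'_{F_i}/x_{F_i})_{i=N-k+1,\dots,N}$, which is a well-defined element of $(\overline{\Fq}^*)^k$ and satisfies $\bft_0''^{\,L_2(v,Q)} = \bfun_k$. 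I then need to find $\bft_0' \in (\overline{\Fq}^*)^{N-k}$ solving $\bft_0'^{\,L_1(v,Q)} = \bft_0''^{\,-B(v,Q)}$; since $\det L_1(v,Q) \cdot \det L_2(v,Q) = \pm \det A(v,Q)$, which is coprime to the characteristic by \ref{H2}, so is $\det L_1(v,Q)$, and Lem.~\ref{solsys} produces the required $\bft_0'$. Extending $\bft_0$ to $\bft \in (\overline{\Fq}^*)^r$ by $t_F = 1$ for $F \not\ni v$ yields an element of $G$ with $\bft \cdot \bfx = \bfx'$.

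The bookkeeping around the normalization and the multiplicativity of $\bfs \mapsto \bfs^A$ is routine; the substantive point, which I expect to be the only real obstacle, is ensuring that Lem.~\ref{solsys} applies to $L_1(v,Q)$. This is where \ref{H2} enters in a slightly hidden way: it guarantees $\det A(v,Q)$ is coprime to the characteristic, hence so is the divisor $\det L_1(v,Q)$, which is what makes the lift $\bft_0'' \rightsquigarrow \bft_0 = (\bft_0', \bft_0'')$ into the group $G$ possible.
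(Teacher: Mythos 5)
Your proof is correct and follows essentially the same route as the paper: encode the $G$-membership condition as a power equation with $A(v,Q)$, pass to the Hermite normal form via the unimodular matrix $T(v,Q)$ (using invariance of $\bfun_N$), and split according to the block structure \eqref{hermiteblocs}, with \ref{H2} supplying the coprimality needed to lift $\bft_0''$ to an element of $G$. The only cosmetic difference is that you justify the lifting step by applying Lem.~\ref{solsys} to $L_1(v,Q)$ via $\det L_1(v,Q)\cdot\det L_2(v,Q)=\pm\det A(v,Q)$, whereas the paper extracts roots whose degrees are the diagonal entries of $H(v,Q)$; these amount to the same computation.
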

			
			\begin{proof}
				
				Two tuples $\bfx$ and $\bfx'$ are some $v$-normalized forms of the same point on $\T_Q$ if and only if there exists $\bft \in G$ such that $\bft \cdot \bfx = \bfx'$. This is equivalent to $(t_F)_{F \in P(N-1)} \in G$, with $t_F=1$ for $F \not\ni v$ and $t_F=\frac{x'_F}{x_F}$ for $F \ni v$ such that $F \not\supseteq Q$. There is no condition on the other coordinates $t_F$ since $x_F=x'_F=0$ for $F\supseteq Q$. 
				
				Such a tuple $\bft$ belongs to $G$ if and only if
				\[\left(t_{F_1},\dots,t_{F_{N-k}},\frac{x'_{F_{N-k+1}}}{x_{F_{N-k+1}}}, \dots, \frac{x'_{F_{N}}}{x_{F_{N}}},1,\dots,1\right)^{A_P} = {\bf1}_r\]
				where $A_P$ is the $r \times N$ matrix whose rows are the $u_{F_i}$, ordered in the same way than the facets. The last coordinates of $\bft$ being ones, this is equivalent to
				\[\left(t_{F_1},\dots,t_{F_{N-k}},\frac{x'_{F_{N-k+1}}}{x_{F_{N-k+1}}}, \dots, \frac{x'_{F_{N}}}{x_{F_{N}}}\right)^{A(v,Q)} = {\bf1}_N.\]
				The vector ${\bf1}_N$ being invariant under powering by $T(v,Q)$, this is tantamount to
				\begin{equation}\label{eq:PowerH}
					\left(t_{F_1},\dots,t_{F_{N-k}},\frac{x'_{F_{N-k+1}}}{x_{F_{N-k+1}}}, \dots, \frac{x'_{F_{N}}}{x_{F_{N}}}\right)^{H(v,Q)} = {\bf1}_N,
				\end{equation}
				where $H(v,Q)$ is the HNF of $A(v,Q)$ (Eq. \eqref{hermite}).
				
				\smallskip
				
				It is thus enough to prove that there exists $(t_{F_1}, \dots, t_{F_{N-k}})\in (\overline{\Fq}^*)^{N-k}$ satisfying \eqref{eq:PowerH} if and only if
				\begin{equation}\label{eq:PowerL}
					\left(\frac{x'_{F_{N-k+1}}}{x_{F_{N-k+1}}}, \dots,\frac{x'_{F_{N}}}{x_{F_{N}}}\right)^{L_2(v,Q)} = {\bf1}_k,
				\end{equation}
				where the matrix $L_2(v,Q)$ appears in the bloc form of the HNF $H(v,Q)$ (Eq. \eqref{hermiteblocs}).
				
				The \textquote{if} part is clear by definition of $L(v,Q)$ \eqref{hermiteblocs}. \textit{Conversely}, if we assume that Eq. $\eqref{eq:PowerL}$ holds, computing $(t_{F_1}, \dots, t_{F_{N-k}})\in (\overline{\Fq}^*)^{N-k}$ satisfying Eq. \eqref{eq:PowerH} essentially consists in extracting roots whose degrees are the diagonal entries of $H(v,Q)$, which are coprime with the caracteristic \ref{H2} since $\det A(v,Q) = \pm \det H(v,Q)$.
						\end{proof}

			The rational points of $\T_Q$ viewed in the affine chart $\mathcal{U}_{v}$ are easily characterized.	
			
			\begin{lemma}\label{repTQrat}
				Assume that $\bfx$ is a $v$-normalized representative of a point $p$ on $\T_Q$.
				Then $p$ is $\Fq$-rational if and only if $\left(x_{F_{N-k+1}},\dots,x_{F_N}\right)^{L2(v,Q)} \in (\Fq^*)^k$.
			\end{lemma}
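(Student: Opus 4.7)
\medskip

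The plan is to reduce $\Fq$-rationality of $p$ to a Frobenius-invariance condition on the same orbit and then apply the previous lemma. A geometric point $p$ of the quotient $\bfX_P$ is $\Fq$-rational if and only if any of its representatives $\bfx \in \A^r \setminus \mathcal{Z}$ lies in the same $G$-orbit as its Frobenius image $\bfx^q = (x_F^q)_{F \in P(N-1)}$.

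First I would check that $\bfx^q$ is again a $v$-normalized representative of a point on $\T_Q$. Indeed $x_F^q = 1$ for every $F \not\ni v$ and $x_F^q = 0$ for every $F \supseteq Q$, so $\bfx^q$ satisfies the defining conditions of $v$-normalization; moreover $\bfx^q$ lies in $\T_Q$ since the set of vanishing coordinates is unchanged. Consequently $p$ is $\Fq$-rational exactly when $\bfx$ and $\bfx^q$ are two $v$-normalized representatives of the same point on $\T_Q$.

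Next I would invoke Lemma \ref{2rep} with $\bfx'=\bfx^q$. It tells me that this coincidence holds if and only if
\[
\left(x^q_{F_{N-k+1}},\dots,x^q_{F_N}\right)^{L_2(v,Q)} = \left(x_{F_{N-k+1}},\dots,x_{F_N}\right)^{L_2(v,Q)}.
\]
Because $L_2(v,Q)$ has integer entries, the operations $\bfy \mapsto \bfy^{L_2(v,Q)}$ and coordinate-wise $q$-th power commute (Notation \ref{PowMat}), so the left-hand side equals $\bigl(\left(x_{F_{N-k+1}},\dots,x_{F_N}\right)^{L_2(v,Q)}\bigr)^q$. Setting $\bfy := \left(x_{F_{N-k+1}},\dots,x_{F_N}\right)^{L_2(v,Q)}$, the condition becomes $\bfy^q = \bfy$, i.e. $\bfy \in \Fq^k$. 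Finally, since the coordinates $x_{F_i}$ for $i \in \{N-k+1,\dots,N\}$ correspond to facets containing $v$ but not $Q$, they are all nonzero on $\T_Q$, and $L_2(v,Q)$ is invertible over $\mathbb{Q}$; hence $\bfy \in (\overline{\Fq}^*)^k$, and the condition refines to $\bfy \in (\Fq^*)^k$, as desired.

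There is essentially no obstacle once Lemma \ref{2rep} is available: the only point requiring a small verification is the commutation of $q$-th powering with the monomial map $\bfy \mapsto \bfy^{L_2(v,Q)}$, which is immediate from the integrality of the exponents. The rest is bookkeeping about which coordinates are zero, nonzero, or equal to $1$ in a $v$-normalized representative.
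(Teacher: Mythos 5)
Your proof is correct and follows essentially the same route as the paper: characterize $\Fq$-rationality by $\bfx$ and $\bfx^q$ being $v$-normalized representatives of the same point of $\T_Q$, apply Lemma~\ref{2rep} with $\bfx'=\bfx^q$, and commute the Frobenius with the monomial map $\bfy\mapsto\bfy^{L_2(v,Q)}$. The only cosmetic difference is that you conclude via $\bfy^q=\bfy$ together with the nonvanishing of the coordinates of $\bfy$ (for which invertibility of $L_2(v,Q)$ is not needed, only that the $x_{F_i}$ are units), whereas the paper writes the equivalent condition $\bfy^{(q-1)I_k}=\mathbf{1}_k$ directly.
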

			\begin{proof}
				The point $p$ is rational if and only if $\bfx$ and $\bfx^q$ are both some of its $v$-normalized representatives. By Lem. \ref{2rep}, this is equivalent to 
				\[		\left(x_{F_{N-k+1}}^{q-1},\dots,x_{F_N}^{q-1}\right)^{L2(v,Q)}={\bf1}_k.\]
				The proof is concluded by noticing that \[\left(x_{F_{N-k+1}}^{q-1},\dots,x_{F_N}^{q-1}\right)^{L2(v,Q)}=\left(\left(x_{F_{N-k+1}},\dots,x_{F_N}\right)^{L2(v,Q)}\right)^{(q-1)I_k}\]
				and that $(y_1,\dots,y_k)^{(q-1)I_k}={\bf1}_k$ if and only if $(y_1,\dots,y_k) \in (\Fq^*)^k$.
			\end{proof}

			\subsection{Piecewise toric code} 
			
			To construct the evaluation code, we take the naive evaluation of monomials $\chi^{\ps{m}{P}}$ at the $r$-tuples representing the rational points of $\bfX_P$, described in Lem. \ref{repTQrat}.
			
			\begin{remark} Such $r$-tuples are likely to have coordinates in an extension of $\Fq$, as their computation may require root extractions. At first glance, one may doubt that the evaluation of a monomial at a rational point lies in $\Fq^n$. But, as fairly expected, this is the case as stated in Cor. \ref{EvRat}.
			\end{remark}
		
			Given a proper face $Q \preceq P$, let us have a look at the evaluation of monomials $\chi^{\ps{m}{P}}$ at the points on $\T_Q$. They have some zero coordinates, which makes some monomials vanish.
			
			\begin{lemma}\label{zeroQ}
				Let $\bfx$ be a representative of a point on $\T_Q$. Then a monomial $\chi^{\ps{m}{P}}$ has a non zero evaluation at $\bfx$ if and only if $m$ belongs to the face $Q$.
			\end{lemma}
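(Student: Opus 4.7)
The plan is a direct unpacking of the formula for $\chi^{\ps{m}{P}}$ combined with the explicit description of which coordinates of a representative of a point on $\T_Q$ vanish.

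First, I would recall from \eqref{monhom} that, writing $D=\sum_F \alpha_F D_F$ with $P=P_D$, one has
\[\chi^{\ps{m}{P}}=\prod_{F \in P(N-1)} X_F^{\ps{m}{u_F}+\alpha_F},\]
and that since $m \in P \cap \Z^N$, every exponent $\ps{m}{u_F}+\alpha_F$ is a nonnegative integer by the defining inequalities \eqref{polydiv}.

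Second, I would use the description of $\T_Q$ as a subset of $\A^r \setminus \mathcal{Z}$ given just after \eqref{union}: a representative $\bfx$ of a point on $\T_Q$ satisfies $x_F=0$ exactly for those facets $F$ with $F \supseteq Q$, while $x_F \neq 0$ for the remaining facets. Consequently, the product $\chi^{\ps{m}{P}}(\bfx)$ is non-zero if and only if none of the vanishing coordinates is raised to a positive power, i.e.\ if and only if $\ps{m}{u_F}+\alpha_F=0$ for every facet $F \supseteq Q$.

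Finally, I would invoke the standard combinatorial description of faces of $P=P_D$: the face $Q$ is the locus of points of $P$ at which the inequalities $\ps{m}{u_F} \geq -\alpha_F$ become equalities precisely for the facets $F$ containing $Q$. Thus $m \in Q$ if and only if $m \in P$ and $\ps{m}{u_F}=-\alpha_F$ for all $F \supseteq Q$, which is exactly the non-vanishing condition obtained in the previous step. There is no real obstacle here; the only care needed is to make sure the face characterization is stated in the form dual to \eqref{polydiv}, which is standard for lattice polytopes given as intersections of half-spaces.
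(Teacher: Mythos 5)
Your proposal is correct and follows the same route as the paper's proof: both observe that the vanishing coordinates of a representative of a point on $\T_Q$ are exactly those $x_F$ with $F \supseteq Q$, so non-vanishing of $\chi^{\ps{m}{P}}(\bfx)$ amounts to $\ps{m}{u_F}+\alpha_F=0$ for all such $F$, which characterizes $m \in Q$. Your version merely spells out the two standard facts the paper leaves implicit (nonnegativity of the exponents via \eqref{polydiv} and the facet-equality description of the face $Q$), which is fine.
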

			
			\begin{proof}
				Since $x_F=0$ for $F \supseteq Q$, the evaluation of $\chi^{\ps{m}{P}}=\prod X_F^{\ps{m}{u_F}+a_F}$ is non-zero if and only if $\ps{m}{u_F}+a_F = 0$ for $F \supseteq Q$, which exactly means that $m \in Q$.
			\end{proof}
			
			The previous lemma and the expression of $\bfX_P$ as union of tori associated to faces of the polytope $P$ (see Eq. \eqref{union}) suggest some links between the projective toric codes associated to the polytope $P$ and the classical toric codes $\C_Q$ associated to the faces $Q$ of $P$. The rest of this paragraph aims to prove of the following result.
			
			\begin{proposition} \label{BlockTor}
				For any $Q \in P(k)$, the puncturing of the code $\PC_P$ at coordinates corresponding to points outside of $\T_Q$ is monomially equivalent to the toric code $\C_Q$.
			\end{proposition}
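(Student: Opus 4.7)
The plan is to use the torus stratification $\bfX_P=\bigsqcup_{Q'\preceq P}\T_{Q'}$ of Eq.~\eqref{union} to reduce the punctured code to an evaluation code on $\T_Q(\Fq)$, parametrize that torus via the Hermite normal form machinery of Notation~\ref{Not}, and identify the resulting characters with those defining $\C_Q$. By Lemma~\ref{zeroQ}, $\chi^{\langle m,P\rangle}|_{\T_Q}\equiv 0$ for every $m\in P\cap\Z^N\setminus Q$, so the punctured code is spanned by the evaluation vectors of $\chi^{\langle m,P\rangle}$ for $m\in Q\cap\Z^N$. Fixing a vertex $v$ of $Q$, Lemmas~\ref{repTQrat} and~\ref{2rep} together with~\ref{H2} yield a bijection $(\Fq^*)^k\simeq\T_Q(\Fq)$ via a choice $\bfs\mapsto\bfx(\bfs)$ of $v$-normalized representative whose last $k$ nonzero coordinates $\bfy(\bfs)$ satisfy $\bfy(\bfs)^{L_2(v,Q)}=\bfs$; such a section exists over $\overline{\Fq}$ by Lemma~\ref{solsys}, since $\det L_2(v,Q)$ divides $\det A(v,Q)$ and is hence coprime to the characteristic. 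For $m\in Q\cap\Z^N$, the relations $x_F=1$ for $F\not\ni v$ and $\langle m,u_F\rangle+a_F=0$ for $F\supseteq Q$ collapse the evaluation to
\[\chi^{\langle m,P\rangle}(\bfx(\bfs))=\bfy(\bfs)^{\mathbf{c}(m)},\qquad \mathbf{c}(m)=\bigl(\langle m,u_{F_i}\rangle+a_{F_i}\bigr)_{i=N-k+1}^N.\]

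Next, fix a base point $m_0\in Q\cap\Z^N$. The heart of the argument is the claim $\mathbf{c}(m)-\mathbf{c}(m_0)=L_2(v,Q)\,\bar m$ for some $\bar m\in\Z^k$; once this is granted, combining with $\bfs=\bfy(\bfs)^{L_2(v,Q)}$ yields
\[\chi^{\langle m,P\rangle}(\bfx(\bfs))=\chi^{\langle m_0,P\rangle}(\bfx(\bfs))\cdot\bfs^{\bar m}.\]
As $m$ ranges over $Q\cap\Z^N$, the reduced exponents $\bar m$ run bijectively through the lattice points of the polytope $\bar Q\subset\R^k$ obtained by expressing $Q-m_0$ in a $\Z$-basis of the affine span of $Q-m_0$. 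The displayed formula therefore presents the punctured code as $\C_{\bar Q}\cong\C_Q$ multiplied coordinatewise by the nowhere-zero factor $\bfs\mapsto\chi^{\langle m_0,P\rangle}(\bfx(\bfs))$, which is the asserted monomial equivalence.

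The main obstacle is the integrality claim $\bar m\in\Z^k$, since \textit{a priori} the vector $L_2(v,Q)^{-1}(\mathbf{c}(m)-\mathbf{c}(m_0))$ is only rational. This is where the zero block in the upper right of the lower-triangular Hermite form $H(v,Q)$ of Notation~\ref{Not} is essential. Writing $m'=T(v,Q)^{-1}m$, the identity $A(v,Q)T(v,Q)=H(v,Q)$ shows that $\langle m,u_{F_i}\rangle$ for $i\le N-k$ depends only on $m'_{\le N-k}$; for $m\in Q$ this quantity is pinned to $-a_{F_i}$ by the facet equations, so the first $N-k$ coordinates of $(m-m_0)'$ vanish, and the remaining $k$, forming the integer vector $\bar m$, satisfy $L_2(v,Q)\,\bar m=\mathbf{c}(m)-\mathbf{c}(m_0)$.
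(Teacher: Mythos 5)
Your argument is correct and follows essentially the same route as the paper: Lemma~\ref{zeroQ} to discard monomials with $m\notin Q$, the Hermite-normal-form straightening (your integrality claim for $\bar m$ is exactly Lemma~\ref{straight}, and your evaluation formula is Prop.~\ref{propRM}, just with a general base point $m_0$ in place of the vertex $v$, which only introduces the harmless diagonal factor), and finally the identification of the punctured code with the toric code of the straightened polytope, monomially equivalent to $\C_Q$ by the lattice-equivalence result of Th.~\ref{MonEquivTor}. Your treatment of the bijection $\T_Q(\F_q)\simeq(\F_q^*)^k$ via Lemmas~\ref{2rep}, \ref{repTQrat} and \ref{solsys} is slightly more explicit than the paper's, but the substance is the same.
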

			
			We recall that two codes $C_1$ and $C_2$ of length $n$ and dimension $k$ are said to be \emph{monomially equivalent} if, given a generator matrix $G_1$ of $C_1$, there is an invertible diagonal matrix $\delta$ and a permutation matrix $\Pi$ of size $n$ such that $G_2=G_1\Delta \Pi$ is a generator matrix for $C_2$.

			To prove Prop. \ref{BlockTor}, we first display a more precise expression of the evaluation of a monomial $\chi^{\ps{m}{P}}$ at a point $p \in \T_Q$ by \emph{straightening} the face $Q$: we determine a unimodular transformation that maps $Q\in P(k)$ into a subspace of dimension $k$ spanned by $k$ vectors of the \emph{canonical basis}. For instance, straightening an edge $Q \in P(1)$ of a polygon makes it horizontal or vertical.
			
			\begin{lemma}[Straightening a face]\label{straight}
				Let $Q \in P(k)$ and take a vertex $v$ of $Q$. Then, the matrix $T(v,Q)$, defined in \eqref{hermite}, satisfies:
				\[\forall m \in Q, \: \exists ! (\tilde{m}_1,\dots,\tilde{m}_k) \in \Z^ k \st (m-v)(T(v,Q)^\top)^{-1} =\sum_{i=1}^k \tilde{m}_i e_{N-k+i},\]
				where $(e_1,\dots,e_N)$ is the canonical basis of $\R^N$.
			\end{lemma}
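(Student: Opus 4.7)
The plan is to compute $w := (m-v)(T(v,Q)^\top)^{-1}$ directly and show its first $N-k$ coordinates vanish, using the face equations together with the HNF identity $A(v,Q)T(v,Q)=H(v,Q)$.

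First I would translate the geometric hypothesis into a linear-algebraic one. Because $v$ is a vertex of $Q$ and $Q = F_1 \cap \cdots \cap F_{N-k}$, both $m$ and $v$ lie on each $F_i$ for $i \leq N-k$. Writing the support hyperplane of $F_i$ as $\{x : \langle x,u_{F_i}\rangle = -\alpha_{F_i}\}$, I obtain $\langle m-v, u_{F_i}\rangle = 0$ for $i=1,\dots,N-k$. Since the rows of $A(v,Q)$ are precisely the $u_{F_i}$, the row vector $(m-v)A(v,Q)^\top$ has its first $N-k$ coordinates equal to zero.

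Next I would transfer this vanishing to $w$ via the HNF. Transposing $A(v,Q)T(v,Q)=H(v,Q)$ gives $T(v,Q)^\top A(v,Q)^\top = H(v,Q)^\top$, so
\[
w\,H(v,Q)^\top = (m-v)(T(v,Q)^\top)^{-1}T(v,Q)^\top A(v,Q)^\top = (m-v)A(v,Q)^\top,
\]
whose first $N-k$ coordinates are zero. Since $H(v,Q)$ is lower triangular with top-left block $L_1(v,Q)$ and vanishing top-right block, the first $N-k$ columns of $H(v,Q)^\top$ involve only the first $N-k$ coordinates of $w$, through the triangular relation $\sum_{i \leq j} w_i (L_1(v,Q))_{ji} = 0$ for $j=1,\dots,N-k$. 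The diagonal of $L_1(v,Q)$ is nonzero (it is part of a Hermite normal form), so back-substitution forces $w_1 = \cdots = w_{N-k} = 0$, and therefore $w = \sum_{i=1}^k w_{N-k+i}\,e_{N-k+i}$.

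Finally, integrality comes for free because $T(v,Q) \in \GL_N(\Z)$: its inverse has integer entries, and $m-v \in \Z^N$, so $w \in \Z^N$ and each $\tilde{m}_i := w_{N-k+i}$ is an integer. Uniqueness of the decomposition is immediate from the linear independence of $e_{N-k+1},\dots,e_N$. There is no real obstacle; the only subtlety is keeping straight the row/column conventions and exploiting the precise triangular shape of $H(v,Q)$—the substance of the lemma is simply the observation that the change of basis $T(v,Q)$ was designed so that the facets cutting out $Q$ become coordinate hyperplanes.
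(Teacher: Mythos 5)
Your proposal is correct and follows essentially the same route as the paper: setting $w=(m-v)(T(v,Q)^\top)^{-1}$ is exactly the paper's coordinate vector $\tilde m$ (with integrality coming from unimodularity of $T(v,Q)$), and your relation $wH(v,Q)^\top=(m-v)A(v,Q)^\top$ together with back-substitution on the triangular block $L_1(v,Q)$ is the same computation as the paper's identity $\ps{m-v}{u_{F_i}}=\sum_j \tilde m_j H(v,Q)_{i,j}$ followed by induction on $i$.
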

			
			\begin{proof}
				The matrix $T(v,Q)$ is unimodular: its columns form $\Z$-basis of $\Z^N$. Then, for any $m \in Q$, there exists a unique $N$-tuple $\tilde{m}=(\tilde{m}_1,\dots,\tilde{m}_N) \in \Z^N$ such that 
				\[m-v=\sum_{j=1}^N \tilde{m}_j e_jT(v,Q)^\top.\]
				We want to prove that the $N-k$ first coordinates of $\tilde{m}$ are zero. For $i \in \{1,\dots,N\}$, 
				
				\[\ps{m-v}{u_{F_i}}=\sum_{j=1}^N \tilde{m}_j e_jT(v,Q)^\top\left(e_i A(v,Q)\right)^\top=\sum_{j=1}^N \tilde{m}_j H(v,Q)_{i,j}.\]
				
				The matrix $H(v,Q)$ being lower triangular, the entry $H(v,Q)_{i,j}$ is zero for $j >i$ and diagonal entries are not zero. In addition, the lattice points $m$ and $v$ belong to $Q$, then for $i \in \{1,\dots,N-k\}$,
				\[			0=\ps{m-v}{u_{F_i}}=\sum_{j=1}^i \tilde{m}_j  H(v,Q)_{i,j}.\]
				A simple induction on $i$ proves that $\tilde{m}_i=0$ for $i \in \{1,\dots,N-k\}$. Shifting the tuple $\tilde{m}$ gives the expected expression.
			\end{proof}

			\begin{proposition}\label{propRM}
				Let us take $Q \in P(k)$, a lattice element $m \in Q\cap \Z^N$, a vertex $v$ of $Q$ and $\bfx$ a $v$-normalized representative of a point on $\T_Q$. Then, using Notation \ref{Not} of Section \ref{SecHermite}, there exist $\tilde{m_1},\dots\tilde{m_k} \in \Z$ such that
				\begin{equation}\label{RM}
					\chi^{\ps{m}{P}}(\bfx)=\prod_{j=1}^k y_j^{\tilde{m_j}} \text{ where } (y_1,\dots,y_k)=(x_{F_{N-k+1}},\dots,x_{F_N})^{L_2(v,Q)} \in \left(\overline{\Fq}^*\right)^k.
				\end{equation}
			\end{proposition}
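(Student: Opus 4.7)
The plan is to compute $\chi^{\ps{m}{P}}(\bfx)$ directly and then massage the exponents using the Hermite normal form machinery from Notation \ref{Not} and Lemma \ref{straight}.

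First I would simplify the evaluation using the $v$-normalization. Since $x_F = 1$ for $F \not\ni v$ and $x_F = 0$ for $F \supseteq Q$, only the factors indexed by the $N$ facets $F_1, \dots, F_N$ containing $v$ survive, and among these the factors $x_{F_i}^{\ps{m}{u_{F_i}} + a_{F_i}}$ with $i \leq N-k$ (where $F_i \supseteq Q$) have exponent $0$ because $m \in Q$ forces $\ps{m}{u_{F_i}} = -a_{F_i}$ on those facets. Next, using that $v$ itself is a vertex of $P$ lying on $F_1,\dots,F_N$, I have $a_{F_i} = -\ps{v}{u_{F_i}}$ for $i = 1, \dots, N$. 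This lets me rewrite
\[
\chi^{\ps{m}{P}}(\bfx) = \prod_{i=1}^{N} x_{F_i}^{\ps{m-v}{u_{F_i}}}.
\]

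The second step is to plug in the straightening formula from Lemma \ref{straight}: there are integers $\tilde{m}_1,\dots,\tilde{m}_k$ such that $m - v = \tilde{m}\,T(v,Q)^\top$ where $\tilde{m} = (0,\dots,0,\tilde{m}_1,\dots,\tilde{m}_k)$. Pairing with $u_{F_i} = e_i A(v,Q)$ yields
\[
\ps{m-v}{u_{F_i}} = \tilde{m}\,T(v,Q)^\top A(v,Q)^\top e_i^\top = \tilde{m}\,H(v,Q)^\top e_i^\top,
\]
i.e., the $i$-th entry of $\tilde{m}\,H(v,Q)^\top$. Because the first $N-k$ entries of $\tilde{m}$ are zero and because $H(v,Q)$ has block form \eqref{hermiteblocs} with a zero top-right block $0$ and bottom-right block $L_2(v,Q)$, this inner product vanishes for $i \leq N-k$ (recovering the earlier cancellations) and, for $i = N-k+\ell$ with $1 \leq \ell \leq k$, equals $\sum_{j=1}^{k} \tilde{m}_j L_2(v,Q)_{\ell,j}$.

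Substituting back gives
\[
\chi^{\ps{m}{P}}(\bfx) = \prod_{\ell=1}^{k} x_{F_{N-k+\ell}}^{\sum_{j=1}^{k} \tilde{m}_j L_2(v,Q)_{\ell,j}} = \prod_{j=1}^{k} y_j^{\tilde{m}_j},
\]
where in the last equality I have swapped the order of the two products and applied Notation \ref{PowMat} to recognize $y_j = \prod_{\ell=1}^{k} x_{F_{N-k+\ell}}^{L_2(v,Q)_{\ell,j}}$ as the $j$-th coordinate of $(x_{F_{N-k+1}},\dots,x_{F_N})^{L_2(v,Q)}$. I do not expect any real obstacle; the only bookkeeping point that needs care is to match the row/column conventions of Notation \ref{PowMat} with the way Lemma \ref{straight} presents the transposed matrix $T(v,Q)^\top$, so that the block decomposition of $H(v,Q)$ genuinely produces the matrix $L_2(v,Q)$ in the exponents, and not its transpose.
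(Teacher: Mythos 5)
Your proof is correct and follows essentially the same route as the paper: simplify the evaluation using the $v$-normalization and $m\in Q$, replace $a_{F_i}$ by $-\ps{v}{u_{F_i}}$, apply the straightening of Lemma \ref{straight} so that the exponents become entries of $H(v,Q)$, and read off the bottom-right block $L_2(v,Q)$ to regroup the product as $\prod_j y_j^{\tilde{m}_j}$. Your index bookkeeping (exponent $\sum_{j}\tilde{m}_j L_2(v,Q)_{\ell,j}$ for $i=N-k+\ell$) is in fact cleaner than the paper's, which contains a harmless index slip at the corresponding step.
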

			
			\begin{proof} As $m \in Q$, we have $\ps{m}{u_{F_i}}+a_{F_i}=0$ for $i \in \{1,\dots,N-k\}$. The element $\bfx$ is $v$-normalized, which means that $x_{F_i}=1$ for $i >N$. Then
				\[\chi^{\ps{m}{P}}(\bfx)=\prod_{i=N-k}^N x_{F_i}^{\ps{m}{u_{F_i}}+a_{F_i}}.\]
				
				Since $v \in F_i$ for every $ i\in \{1,\dots,N\}$, we have $\ps{v}{u_{F_i}}=-a_{F_i}$. Therefore, for every $i \in \{1,\dots,k\}$, Lem. \ref{straight} implies that 
				\[\ps{m}{u_{F_{N-k+i}}}+a_{F_{N-k+i}}=\ps{m-v}{u_{F_{N-k+i}}}=\sum_{j=1}^k  \tilde{m}_j \left(H(v,Q)\right)_{N-k+i,j}.\] 
				Moreover, we have $\left(H(v,Q)\right)_{N-k+i,j}=\left(L_2(v,Q)\right)_{i,j}$ since $L_2(v,Q)$ is the lower right block of $H(v,Q)$ \eqref{hermiteblocs}. Then $\left(H(v,Q)\right)_{N-k+i,j}=\left(L_2(v,Q)\right)_{i,j}$ and
				\[\chi^{\ps{m}{P}}(\bfx)=\prod_{j=1}^k \prod_{i=1}^k x_{F_{N-k+i}}^{\tilde{m_j} \left(L_2(v,Q)\right)_{i,j}}=\prod_{j=1}^ k y_j^{\tilde{m_j}}.\]
			\end{proof}
			
			\begin{corollary}\label{EvRat}
				The evaluation of a monomial $\chi^{\ps{m}{P}}$ at a normalized representative of a $\Fq$-rational point of $\T_Q$ belongs to $\Fq$.
			\end{corollary}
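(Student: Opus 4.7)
The plan is to combine Proposition \ref{propRM} with Lemma \ref{repTQrat}, which together pin down everything we need. Proposition \ref{propRM} rewrites the evaluation $\chi^{\ps{m}{P}}(\bfx)$ as a Laurent monomial $\prod_{j=1}^k y_j^{\tilde{m}_j}$ in the ``straightened'' coordinates $(y_1,\dots,y_k) = (x_{F_{N-k+1}},\dots,x_{F_N})^{L_2(v,Q)}$, whereas Lemma \ref{repTQrat} asserts that the $\Fq$-rationality of the point represented by $\bfx$ is exactly the condition $(y_1,\dots,y_k) \in (\Fq^*)^k$. Chaining these two facts gives the claim immediately.

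More concretely, I would first invoke Proposition \ref{propRM} to obtain integers $\tilde{m}_1,\dots,\tilde{m}_k$ and the vector $(y_1,\dots,y_k) \in (\overline{\Fq}^*)^k$ such that $\chi^{\ps{m}{P}}(\bfx) = \prod_{j=1}^k y_j^{\tilde{m}_j}$. Then, using the hypothesis that $\bfx$ represents an $\Fq$-rational point of $\T_Q$, Lemma \ref{repTQrat} yields $y_j \in \Fq^*$ for each $j$. Since $\Fq$ is a field, the product $\prod_{j=1}^k y_j^{\tilde{m}_j}$ (with possibly negative exponents interpreted via inverses in $\Fq^*$) lies in $\Fq$, concluding the proof.

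There is essentially no obstacle here: the two preceding results have been engineered precisely so that this corollary is an immediate compatibility statement between the form of the evaluation and the rationality criterion. The only small point worth mentioning is that the exponents $\tilde{m}_j$ coming from Lemma \ref{straight} may be negative; this is harmless because the $y_j$ belong to $\Fq^*$, not merely to $\Fq$, so inversion stays inside $\Fq$.
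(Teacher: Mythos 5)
Your argument is exactly the paper's: combine Prop.~\ref{propRM} with the rationality criterion of Lem.~\ref{repTQrat} and observe that a product of elements of $\Fq^*$ raised to integer (possibly negative) powers stays in $\Fq$. The only thing you omit is the case $m \notin Q$, to which Prop.~\ref{propRM} does not apply since it assumes $m \in Q \cap \Z^N$; the corollary is stated for an arbitrary monomial $\chi^{\ps{m}{P}}$, so you should add, as the paper does, that for $m \notin Q$ the evaluation is simply $0 \in \Fq$ by Lem.~\ref{zeroQ}. With that one-line case distinction added, your proof coincides with the paper's.
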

			
			\begin{proof}
				If $m \notin Q$, then $\chi^{\ps{m}{P}}(x)=0$ by Lem. \ref{zeroQ}. Otherwise, Lem. \ref{repTQrat} states that a $v$-normalized representative of an $\Fq$-rational point on $\T_Q$ satisfies $\bfy=\left(x_{F_{N-k+1}},\dots,x_{F_N}\right)^{L2(v,Q)} \in (\Fq^*)^k$. In this case, the quantity given in Prop. \ref{propRM} clearly lies in $\Fq$.
			\end{proof}
			
			The expression in Prop. \ref{propRM} has exactly the form encountered in toric codes (See Def. \ref{def:toric-codes}). Before precisely stating a comparison between the restriction of the code $\PC_P$ on points of $\T_Q$ and the toric code $\C_Q$, let us recall a classification of toric codes made by J. Little and R. Schwarz \cite{LittlemDim}.
			
			We say that integral polytopes $P_1$ and $P_2$ in $\R^N$ are \emph{lattice equivalent} if there exists an affine transformation $\phi$ defined by $\phi(m) =mT+\lambda$ with $T\in \GL_N(\Z)$ and $\lambda \in \Z^N$ such that $\phi(P_1) =P_2$. Not only lattice equivalent polytopes define isomorphic toric varieties (Fact \ref{IsomVar}), but they also give equivalent codes.
			
			\begin{theorem}\label{MonEquivTor}\cite[Th. 3.3]{LittlemDim}
				If two polytopes $P_1$ and $P_2$ are lattice equivalent, then the toric codes $\C_{P_1}$ and $\C_{P_2}$ are monomially equivalent.
			\end{theorem}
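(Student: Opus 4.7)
The plan is to unwind the lattice equivalence $\phi(m)=mT+\lambda$ (with $T\in\GL_N(\Z)$, $\lambda\in\Z^N$) and exhibit an explicit permutation of the evaluation points together with a diagonal scaling that transforms a generator matrix of $\C_{P_1}$ into one of $\C_{P_2}$. The two codes have the same length $(q-1)^N$, so the only thing to exhibit is the pair $(\Delta,\Pi)$.

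I would first use that $\phi$ restricts to a bijection $P_1\cap\Z^N\to P_2\cap\Z^N$ (because $T\in\GL_N(\Z)$ and $\lambda\in\Z^N$ both preserve the integer lattice), and write any $m'\in P_2\cap\Z^N$ as $m'=mT+\lambda$ for a unique $m\in P_1\cap\Z^N$. Then the character factors as
\[
\chi^{m'}(\bft)=\chi^{mT}(\bft)\cdot \chi^{\lambda}(\bft).
\]
A direct computation with Notation~\ref{PowMat} (matching indices with the convention $(\bft^A)_j=\prod_i t_i^{a_{ij}}$) gives the identity $\chi^{mT}(\bft)=\chi^{m}(\bft^{T^{\top}})$, which is the one substantive algebraic step in the proof.

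Next, I would observe that since $T^{\top}\in\GL_N(\Z)$, the map $\psi:\bft\mapsto \bft^{T^{\top}}$ is an automorphism of the group $(\Fq^{*})^N$: Lemma~\ref{solsys} applies (determinant $\pm 1$, coprime to any characteristic), and its inverse is $\bfs\mapsto\bfs^{(T^{\top})^{-1}}$. Fixing any enumeration $(\bft^{(1)},\dots,\bft^{(n)})$ of $(\Fq^{*})^N$, $\psi$ induces a permutation $\sigma\in\mathfrak{S}_n$ and hence a permutation matrix $\Pi$. The scalar $\chi^{\lambda}(\bft)$ lies in $\Fq^{*}$ for every $\bft\in(\Fq^{*})^N$, so coordinate-wise multiplication by $(\chi^{\lambda}(\bft^{(j)}))_{j}$ is an invertible diagonal matrix $\Delta$. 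Combining these yields, for every $m'=\phi(m)$,
\[
\ev(\chi^{m'})=\Delta\,\Pi\,\ev(\chi^{m}),
\]
and since neither $\Pi$ nor $\Delta$ depends on $m$, applying this to the spanning family $\{\chi^{m}\mid m\in P_1\cap\Z^N\}$ shows that a generator matrix of $\C_{P_1}$ is mapped to a generator matrix of $\C_{P_2}$ by right-multiplication with $\Delta\Pi$.

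The main obstacle is purely notational: the conventions in Notation~\ref{PowMat} force one to be careful with transposes and with the direction in which the permutation $\sigma$ acts on coordinates, otherwise one ends up relating $\C_{P_1}$ to $\C_{P_2^{\top}}$ or getting a sign error in the exponents. There is no deeper difficulty: once the key identity $\chi^{mT}(\bft)=\chi^{m}(\bft^{T^{\top}})$ is verified, the theorem follows from the elementary observations that $\GL_N(\Z)$ acts by automorphisms on the torus and that translation of exponent vectors by $\lambda$ becomes multiplication by the fixed character $\chi^{\lambda}$.
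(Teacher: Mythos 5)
Your argument is correct, and it is essentially the standard proof of this fact: the paper itself gives no proof, quoting the result from Little--Schwarz \cite[Th.\ 3.3]{LittlemDim}, whose argument is the same unwinding of $\phi(m)=mT+\lambda$ into a permutation of the torus points (via the automorphism $\bft\mapsto\bft^{T^{\top}}$ of $(\Fq^*)^N$, valid since $\det T=\pm1$) and a diagonal scaling by the fixed character $\chi^{\lambda}$. Your key identity $\chi^{mT}(\bft)=\chi^{m}(\bft^{T^{\top}})$ checks out with Notation~\ref{PowMat}, and since right multiplication by the resulting monomial matrix is a linear isomorphism carrying the spanning family of $\C_{P_1}$ onto that of $\C_{P_2}$, monomial equivalence follows.
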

		
		Note that the converse is false (see \cite{LYZZ15} for instance). We have now gathered all the ingredients to prove Prop. \ref{BlockTor}. 
			\begin{proof}[Proof of Prop. \ref{BlockTor}]	
				Fix a vertex $v \in Q$. By Prop. \ref{propRM}, puncturing $\PC_P$ outside $\T_Q$ gives the toric code $\C_{\phi(Q)}$ where $\phi$ is the invertible affine map defined by $\phi(m)=(m-v) \left(T(v,Q)^\top\right)^{-1}$. By Th. \ref{MonEquivTor}, it is thus monomially equivalent to $\C_Q$.
			\end{proof}

			\subsection{A ``generator matrix'' of the code $\PC_P$}\label{ExpConst}
						
			The purpose of this section is to give a matrix $M(P)$ whose rows span the code $\PC_P$. In fact, this matrix represents the evaluation map of Def. \ref{DefCode} in the basis formed by the monomials $\chi^\ps{m}{P}$. Its has $\#(P \cap \Z^N)$ rows and $n$ columns, where 
			\begin{equation}\label{eq:NbPts}
				n:=\#\bfX_P(\F_Q)=\sum_{k=0}^N \#P(k) \times (q-1)^k
			\end{equation}
			can be computed from Eq. \eqref{union}, using that $\T_Q(\Fq) \simeq (\Fq^*)^{\dim(Q)}$. Beware that the matrix $M(P)$ may not have full rank : in this case, it is not strictly speaking a generator matrix of the code.
			
			The depiction of $M(P)$ relies on Prop. \ref{propRM}. Before giving a protocol to construct $M(P)$ for any polytope, let us focus on the simpler case when $P$ is a polygon.

			\subsubsection{Example for polygons ($N=2$)}
			
			We number the facets (which are edges) $F_1,\dots,F_r$ and the vertices $v_1,\dots,v_r$ so that $v_i \in F_{i-1} \cap F_i$. By Prop. \ref{propRM}, the restriction of the code along an edge is a code of Reed-Solomon. Then we can sort the rational points of $\bfX_P$ and the lattice points of $P$ so that the matrix of the linear map $\ev_P$ in the monomial basis has the form given in Fig. \ref{MatEv}, where the grey blocks are Vandermonde-type matrices. For $i \in \{0,1,\dots,r-1\}$, we set
			\[V_i=\begin{pmatrix}
				\alpha_1 & \alpha_2& \dots & \alpha_{q-1}\\
				\alpha_1^ 2 & \alpha_2^ 2 & \dots & \alpha_{q-1}^2\\
				\vdots & \vdots &  &\vdots \\
				\alpha_1^{l_i} & \alpha_2^ {l_i} & \dots & \alpha_{q-1}^{l_i}\\
			\end{pmatrix}\]
			where $\Fq^*=\{\alpha_1,\alpha_2,\dots,\alpha_{q-1}\}$ and $l_i:=\#(F_i^\circ\cap \Z)$ is the number of lattice points that are not vertices on the $i$-th edge.

			\begin{figure}[h]
				\centering
				\includegraphics{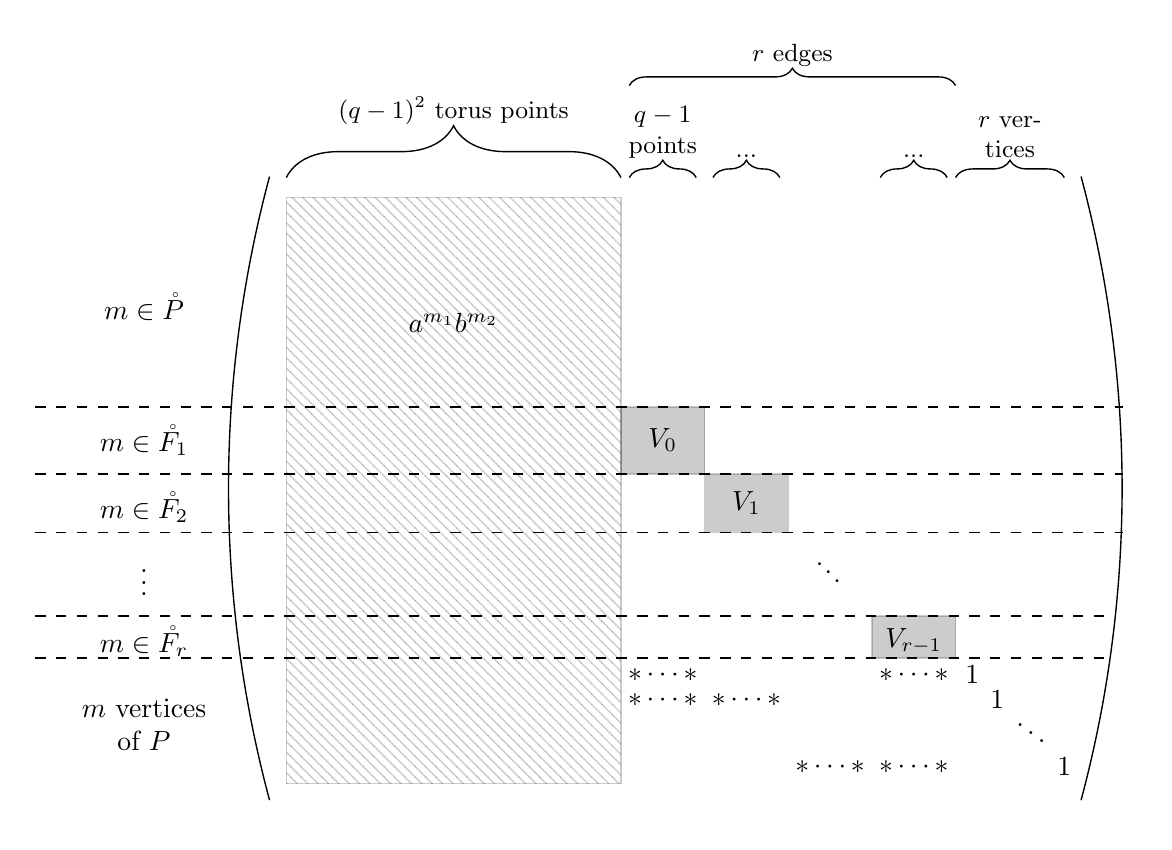}
				\caption{Matrix of the evaluation map associated to a polygon $P$ ($N=2$)}
				\label{MatEv}
			\end{figure}

			\subsection{General case}\label{MP}
			
			To construct the matrix $M(P)$, we benefit from the piecewise toric structure of the code $\PC_P$, displayed in Prop. \ref{BlockTor}. For each face $Q$ of $P$, we need an affine transformation $\phi$ that straightens $Q$, which will give the evaluation on the torus $\T_Q$ a a convenient form. From Sec. \ref{SecHermite}, if $v$ is a vertex of $Q$, the HNF of $A(v,Q)$ provides such a map $\phi$. Instead of running through faces $Q$ and choosing a vertex $v \in Q$ to compute $A(v,Q)$, one idea to save some unnecessary computations of HNFs is to choose flags of faces of $P$, denoted by $\mathcal{F}=(Q_{0} \prec Q_{1} \prec \dots \prec Q_{N-1}\prec P)$, with $\dim Q_{j} = j$ ($i=1,\dots,b$) so that we have one matrix $A_\mathcal{F}$ satisfying $A_\mathcal{F}=A(Q_0,Q_j)$ for all $j \in \{1,\dots,N\}$.
			Then the transition matrix associated to the Herminite normal form of $\mathcal{A}_\mathcal{F}$ straighten all the faces $Q_j$ at once.

			\begin{itemize}
				\item \textbf{Step 0:} Order the points of $P\cap \Z^N$. See Remark \ref{Ordre} for a suggestion of an order that gives the resulting matrix $M(P)$ a triangular by blocks form.
				
				\item \textbf{Step 1:} Find $b$ flags of faces of $P$, denoted by $\mathcal{F}_i=(Q_{i,0} \prec Q_{i,1} \prec \dots \prec Q_{i,N-1}\prec P)$ with $\dim Q_{i,j} = j$ ($i=1,\dots,b$) so that every face $Q \preceq P$ belongs to one of these flags. We can choose $b=\max_{j} \#P(j)$. 
				\item \textbf{Step 2:} Fix $i \in \{1,\dots,b\}$. For every $j \in \{0,\dots,N-1\}$, call $E_{i,j}$ the underlying vector space of $Q_{i,j}$, centered in the point $Q_{i,0}$. We want to determine an invertible integer affine map $\phi_i$ such that 
				\begin{equation}\tag{$\star$}\label{GoodStraight}
					\phi_i(E_{i,j})=\Span(e_1,\dots,e_j).
				\end{equation}
				Set $A_i$ the $N$-square matrix whose rows are the normal vector $u_{F_j}$ of the N facets containing the vextex $Q_{i,0}$, ordered so that $Q_{i,j}=\bigcap_{\ell=0}^{N-j} F_\ell$ for every $j \in \{0,\dots,N\}$. Let $H_i$ be the HNF of $A_i$ and $T_i \in \GL_N(\Z)$ the unimodular matrix such that $A_i T_i= H_i$. Denote by $T'_i$ the matrix obtained by reversing the order of the columns of $T_i$. Then the affine map $\phi_i(m)=T_i'(m-Q_{i,0})$ satisfies \eqref{GoodStraight} by a 
				similar\footnote{We just reverse the columns of $T_i$ for \eqref{GoodStraight} to hold. Otherwise, we would have $\phi_i(E_{i,j})=\Span(e_{N-j+1},\dots,e_N)$ as in Lem. \ref{straight}.} 
				argument than in the proof of Lem. \ref{straight}.
				
				\item \textbf{Step 3:} Now fix a face $Q \in P(k)$. Then there exists $i \in \{1,\dots,b\}$ such that $Q=Q_{i,k}$. 
				
				For every $m \in P \cap \Z^N$, compute the vector $w(m,Q) \in (\Fq)^{(q-1)^k} $ defined by
				\begin{equation}\label{mot}
					w(m,Q)=\left\{\begin{array}{cl}
						\left(\prod_{j=1}^k {x_j}^{\phi_i(m)^{(j)}} \mid (x_1,\dots,x_k) \in (\Fq^*)^k\right) & \text{if } m \in Q, \\
						(0,\dots,0) & \text{if } m \notin Q,
					\end{array}
					\right.
				\end{equation}
				where $\phi_i(m)^{(j)}$ is the $j$-th coordinate of the vector $\phi_i(m)$. By construction of $\phi_i$, if $m \in Q$, then $\phi_i(m)^{(j)}=0$ for $j \in \{k+1,\dots,N\}$.
				
				Form the matrix $M(P,Q)$, of size $\#(P \cap \Z^N)\times (q-1)^k$ by stacking the vectors $w(m,Q)$ on top of each other, in the order chosen at Step 0.
				\item \textbf{Step 4:} Form the matrix $M(P)$ by putting side by side the matrices $M(P,Q)$ constructed in the previous step.
			\end{itemize}

			\begin{remark}\label{Ordre}
				Similarly to the case $N=2$ (See Fig. \ref{MatEv}), we can order the lattice points and the faces of $P$ to make the matrix $M(P)$ triangular by blocks.
				
				First put the block $M(P,P)$, followed by the blocks $M(P,F_i)$ corresponding to facets, then those corresponding to faces of dimension $N-2$ and so on, making the dimension of the faces decrease:
				
				\begin{center}
					\begin{tabular}{C{13mm}C{18mm}C{18mm}C{8mm}C{15mm}C{8mm}C{20mm}}
						$M(P)=$&$\Big(M(P,P)$ &$M(P,F_i)$&$\cdots$& $M(P,Q_i^k)$& $\cdots$& $M(P,v_i)\Big)$\\
						&$\underbrace{\phantom{M(P,P)}}$&$\underbrace{\phantom{M(P,P)}}$& &$\underbrace{\phantom{M(P,P)}}$& &$\underbrace{\phantom{M(P,P)}}$\\
						& big torus& facets & $\cdots$& $k$-dim. faces  & $\cdots$ & vertices
					\end{tabular}
				\end{center}
				
				In the same way, we order the lattice points of $P$ at Step 0 starting by points in the interior of $P$, then in the interior of facets and so on, ending with the vertices.
				
				This way, given a face $Q \preceq P$, the first rows of $M(P,Q)$ correspond to evaluations of $\chi\ps{m}{P}$ on $\T_Q$ for lattice points $m$ inside faces $Q'\neq Q$ with $\dim Q' \geq \dim Q$. These rows are thus filled with zeros, by definition of $w(m,Q)$ (Eq. \eqref{mot}).
			\end{remark}

			\section{Dimension}
			
			As for classical toric codes, the computation of the dimension of a projective toric code $\PC_P$ relies on the reduction modulo $q-1$ of the integral points of the polytope $P$.
			
			\subsection{Reduction modulo $q-1$ and dimension}
			
			\begin{definition}\label{DefEquiv}
				Let us a define a equivalence relation on $\Z^N$ as follows: for two elements $(u,v)\in (\Z^N) ^2$, we write $u \sim v$ if $u-v \in (q-1) \Z^N$.
				
				Let $U$ be a subset of $\Z^N$. We will call a \emph{reduction of} $U$, denoted by $\overline{U}$, a set of representatives of $U$ under the equivalence relation $\sim$, that is to say a subset of $U$ with the following property:
				\[\forall u \in U, \: \exists ! \: \overline{u} \in \overline{U} \text{ s.t. } u \sim \overline{u}.\]
				If $P$ is a lattice polytope in $\R^N$, we denote by $\overline{P}$ a reduction of the lattice points of $P$, i.e. $\overline{P}=\overline{P \cap \Z^N}$.
			\end{definition}

			Reducing the lattice points of the polytope $P$ modulo $q-1$ provides a basis of a classical toric code \cite{Ruano}.

			\begin{theorem}\label{DimTor}\cite{Ruano}
				Let $P$ be a polytope and its associated toric code $\C_P$. Take $\overline{P}$ a reduction of $P$ modulo $q-1$. The kernel of the evaluation map is the $\Fq$-vector space spanned by
				\[\{\chi^m - \chi^{\overline{m}}, \: m \in (P\cap \Z^n)^2, \: \overline{m} \in \overline{P} \st m \neq \overline{m} \}.\]
				A basis of the code $C_P$ is
				\[\left\{(\chi^{\overline{m}}(\bft), \: \bft \in \T^N) \mid \overline{m} \in \overline{P}\right\} \]
				and therefore the dimension of $\C_P$ is equal to $\#\overline{P}$.
			\end{theorem}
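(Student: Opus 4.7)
The strategy is to split the source space $\Span\{\chi^m \mid m \in P \cap \Z^N\}$ into two complementary pieces, show that one sits entirely in the kernel and the other maps injectively, and then conclude by dimension count.

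First, I would record the elementary observation that drives everything: for every $\bft \in (\Fq^*)^N$, we have $t_i^{q-1}=1$, so if $m \sim m'$ (i.e.\ $m-m' \in (q-1)\Z^N$), then $\chi^m(\bft)=\chi^{m'}(\bft)$. In particular, for every $m \in P\cap \Z^N$, denoting by $\overline{m}$ its unique representative in $\overline{P}$, the vector $\chi^m-\chi^{\overline{m}}$ evaluates to zero on $\T^N(\Fq)$. Hence the $\Fq$-span
\[
K:=\Span\bigl\{\chi^m-\chi^{\overline{m}} \mid m \in P\cap \Z^N,\ \overline{m} \in \overline{P},\ m\neq \overline{m}\bigr\}
\]
is contained in $\ker \ev_P$, and $\dim K = \#(P\cap \Z^N) - \#\overline{P}$, because the differences associated to the $\#(P\cap\Z^N)-\#\overline{P}$ non-representative lattice points are linearly independent (each contains a distinct ``new'' monomial).

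Second, I would prove that the family $\{\ev_P(\chi^{\overline{m}}) \mid \overline{m} \in \overline{P}\}$ is linearly independent in $\Fq^n$. This is the core step and the only non-formal one. The characters $\chi^{\overline{m}}\colon (\Fq^*)^N \to \Fq^*$ are group homomorphisms, and two reductions $\overline{m}\neq \overline{m}'$ modulo $q-1$ give distinct homomorphisms of $(\Fq^*)^N$: indeed $\Fq^*$ is cyclic of order $q-1$, so if $\overline{m}-\overline{m}'\not\equiv 0 \pmod{q-1}$ componentwise, there exists a generator $\alpha$ of $\Fq^*$ and an index $i$ with $\alpha^{\overline{m}_i-\overline{m}'_i}\neq 1$. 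Linear independence of distinct characters of a finite abelian group over any field (Artin/Dedekind) then gives the claim.

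Combining the two ingredients finishes the proof. The ambient space $\Span\{\chi^m \mid m \in P\cap\Z^N\}$ decomposes as the direct sum of $\Span\{\chi^{\overline{m}} \mid \overline{m}\in \overline{P}\}$ and $K$; the first piece maps injectively to $\C_P$ by Step~2, and the second piece lies in the kernel by Step~1. A dimension count forces $\ker \ev_P = K$ and $\dim \C_P = \#\overline{P}$, with the stated basis.

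The only delicate point is the linear independence of characters in Step~2; the rest is formal. I would flag that the hypothesis $\overline{P}\subset P\cap \Z^N$ (rather than an arbitrary set of residues) is used implicitly only to ensure that $K$ is generated by elements whose ``leading monomials'' $\chi^m$ are pairwise distinct and disjoint from $\{\chi^{\overline{m}}\}$, making the direct-sum decomposition transparent.
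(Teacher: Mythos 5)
Your proof is correct. Note that the paper does not prove this statement at all: Theorem \ref{DimTor} is quoted from Ruano's work \cite{Ruano} and used as a black box, so there is no internal proof to compare against. Your argument is essentially the standard one behind that reference: the triangular decomposition of $\Span\{\chi^m \mid m \in P\cap\Z^N\}$ into $\Span\{\chi^{\overline{m}} \mid \overline{m}\in\overline{P}\}\oplus K$, the observation that $t^{q-1}=1$ on $\Fq^*$ puts $K$ inside $\ker\ev_P$, and Dedekind--Artin linear independence of the distinct characters $\chi^{\overline{m}}$ of the group $(\Fq^*)^N$ (distinct precisely because elements of $\overline{P}$ are pairwise inequivalent modulo $q-1$) to get injectivity on the complement; the dimension count then forces $\ker\ev_P=K$ and the stated basis. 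One small remark: the generating set as printed in the statement omits the condition $m\sim\overline{m}$ (and has the typo $(P\cap\Z^n)^2$); your reading, taking $\overline{m}$ to be the unique representative of $m$, is the intended one, and without that condition the claim about the kernel would be false, so it is worth making that interpretation explicit.
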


			We benefit from the form of the matrix $M(P)$, constructed in Sec. \ref{MP}, to use this result on classical toric codes in order to get the dimension of projective ones. We are led to reduce the polytope $P$ \emph{face by face}.
			
			\begin{definition}
				For a polytope $P$, we define the interior of $P$ as the set of points of $P$ that do not belong to any proper face of $P$, which we denote by
				\[P^\circ=\{m \in P \st \forall Q \preceq P \text{ with } Q \neq P, \: m \notin Q\}.
				\]
			\end{definition}
			
			\begin{definition}\label{DefEquivProj}
				Given a lattice polytope $P \subset \Z^N$, we define an equivalence relation $\sim_P$ on the set of its lattice points $P \cap \Z^N$ by
				\[m \sim_P m' \: \Leftrightarrow \: \exists Q \preceq P \st m, \: m' \in Q^\circ \AND m-m' \in (q-1) \Z^N.\]
				We call a \emph{projective reduction of $P$} any set of representatives of elements of $P \cap \Z^N$ modulo $\sim_P$.
			\end{definition}
			
			We will denote a projective reduction of $P$ by $\Red(P)$ to emphasize the difference between the relation defined in Def. \ref{DefEquiv} in the study of classical toric codes and in the one of Def. \ref{DefEquivProj} that we use here. However, given reductions of the interior of every face $Q \preceq P$, we get a projective reduction of $P$ by setting $\Red(P) = \bigcup_{Q \preceq P} \overline{Q^\circ}$.
			
			\begin{theorem}\label{DIM} Let $P$ be a polytope and $\PC_P$ its associated toric code.  Let $\Red(P)$ be a projective reduction of $P$. Then
				\[\ker \ev_P = \Span \{\chi^m - \chi^{\overline{m}}: \: m \in (P\cap \Z^n)^2, \: \overline{m} \in \Red(P) \st m \neq \overline{m} \}\]
				and the set 
				\[\left\{\ev_P(\chi^\ps{\overline{m}}{P}) \mid \overline{m} \in \Red(P)\right\} \]
				forms a basis of the code $\PC_P$.
				The dimension of the code $\PC_P$ is equal to $\dim \PC_P = \#\Red(P)$.
			\end{theorem}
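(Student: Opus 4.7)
The plan is to analyze $\ker\ev_P$ face by face, exploiting the decomposition $\bfX_P(\Fq)=\bigsqcup_{Q\preceq P} \T_Q(\Fq)$ and Ruano's theorem (Th.~\ref{DimTor}) on each piece, then to recognize the resulting constraints as exactly those defining $\sim_P$. Fix $f=\sum_m c_m\chi^{\ps{m}{P}}$: the vanishing $\ev_P(f)=0$ is equivalent to $f|_{\T_Q(\Fq)}=0$ for every face $Q\preceq P$. By Lem.~\ref{zeroQ} only the terms with $m\in Q\cap\Z^N$ contribute on $\T_Q$, and by Prop.~\ref{propRM} each such $\chi^{\ps{m}{P}}$ restricts (at a normalized rational representative) to the Laurent character $\chi^{\phi_Q(m)}$ on $(\Fq^*)^{\dim Q}$, where $\phi_Q:Q\cap\Z^N\to\tilde Q\cap\Z^{\dim Q}$ is the $\Z$-affine isomorphism from Lem.~\ref{straight} and the argument $\bfy$ traverses $(\Fq^*)^{\dim Q}$ by Lem.~\ref{repTQrat}. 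Since $\phi_Q$ is induced by a unimodular matrix, $m\equiv m'\pmod{q-1}$ in $\Z^N$ iff $\phi_Q(m)\equiv\phi_Q(m')\pmod{q-1}$ in $\Z^{\dim Q}$. Applying Th.~\ref{DimTor} to the toric code of $\tilde Q$ therefore translates $f|_{\T_Q(\Fq)}=0$ into the condition
\[(\ast_Q)\qquad \sum_{m\in C}c_m=0\quad\text{for every $\sim$-class }C\subseteq Q\cap\Z^N.\]

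The core step is to refine the family $(\ast_Q)$ into constraints indexed by $\sim_P$-classes, i.e.\ by $\sim$-classes living inside some $Q^\circ$, via induction on $\dim Q$. The base $\dim Q=0$ reads $c_v=0$ for each vertex. For the inductive step, fix a $\sim$-class $C\subseteq Q\cap\Z^N$ and decompose
\[C=(C\cap Q^\circ)\sqcup\bigsqcup_{Q'\prec Q}(C\cap Q'^\circ).\]
Each nonempty intersection $C\cap Q'^\circ$ is itself a $\sim$-class of $Q'^\circ$ (since the $\sim$-relation is defined purely by congruence mod $q-1$ in $\Z^N$), so the inductive hypothesis applied to the strictly smaller face $Q'\prec Q$ gives $\sum_{m\in C\cap Q'^\circ}c_m=0$. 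Substituting into $(\ast_Q)$ collapses the total sum to $\sum_{m\in C\cap Q^\circ}c_m=0$, which is exactly the sum-to-zero condition attached to the $\sim_P$-class $C\cap Q^\circ$.

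Conversely, any $c$ satisfying sum-to-zero on every $\sim_P$-class satisfies each $(\ast_Q)$ by reassembling the partition of $C$, so $f\in\ker\ev_P$. Hence $\ker\ev_P$ is cut out by exactly one linear condition per $\sim_P$-class; a standard count on an equivalence partition with $\#\Red(P)$ classes among $\#(P\cap\Z^N)$ elements shows this subspace has dimension $\#(P\cap\Z^N)-\#\Red(P)$ and coincides with $\Span\{\chi^{\ps{m}{P}}-\chi^{\ps{\overline m}{P}}:\: m\sim_P\overline m,\: m\neq\overline m\}$. It follows that $\dim\PC_P=\#\Red(P)$, and the $\#\Red(P)$ images $\ev_P(\chi^{\ps{\overline m}{P}})$ for $\overline m\in\Red(P)$ span $\PC_P$ in the correct dimension, hence form a basis. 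I expect the main difficulty to be the induction above: the clean decomposition of a $\sim$-class of $Q\cap\Z^N$ along the interiors of its subfaces is what lets Ruano's torus-by-torus information aggregate into the projective reduction $\Red(P)$ without overcounting or leaving gaps.
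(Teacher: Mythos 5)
Your proof is correct, but it is organized along a genuinely different route than the paper's. The paper proves Theorem \ref{DIM} by a rank computation: with the ordering of Remark \ref{Ordre}, the evaluation matrix $M(P)$ of Section \ref{MP} is block triangular, its rank is claimed to be the sum of the ranks of the diagonal blocks, and each diagonal block is handled through Prop. \ref{BlockTor} (monomial equivalence with the toric code of the corresponding face) together with Ruano's theorem, giving $\sum_{Q\preceq P}\#\overline{Q^\circ}=\#\Red(P)$. You instead characterize $\ker\ev_P$ directly: restriction to each torus $\T_Q(\Fq)$ (via Lem. \ref{zeroQ}, Prop. \ref{propRM}, Lem. \ref{repTQrat}) turns vanishing on $\T_Q$ into sum-to-zero conditions over congruence classes of $Q\cap\Z^N$, and your induction on $\dim Q$, splitting each class along the interiors of the subfaces of $Q$, converts these into exactly one condition per $\sim_P$-class. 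What your route buys: it actually proves the kernel description and the basis statement asserted in the theorem (the paper's proof only computes the dimension, leaving those claims implicit), and it supplies both inequalities hidden in the assertion that the rank of the block-triangular $M(P)$ is the sum of the diagonal ranks --- the upper bound is precisely your observation that the binomials $\chi^{\ps{m}{P}}-\chi^{\ps{\overline{m}}{P}}$ with $m\sim_P\overline{m}$ lie in the kernel. What the paper's route buys is brevity, by reusing Prop. \ref{BlockTor} rather than redoing the torus-by-torus character computation. The two points you state somewhat quickly --- that $\bfy$ ranges over all of $(\Fq^*)^{\dim Q}$ as the rational points of $\T_Q$ vary, and that the straightening map preserves congruence modulo $q-1$ --- do hold, by Lem. \ref{solsys} together with Lem. \ref{repTQrat} and by unimodularity of $T(v,Q)$ respectively, so there is no gap.
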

			
			\begin{proof}
				The dimension of $\PC_P$ is the rank of the matrix $M(P)$, defined in Section \ref{MP}. By Remark \ref{Ordre}, it is the sum of the rank of the matrices $\tilde{M}(P,Q)$. By Prop. \ref{BlockTor}, the rank of $\tilde{M}(P,Q)$ is the dimension of the toric code $C_{Q^\circ}$, which is equal to  $\#\overline{Q^\circ}$, by Prop. \ref{DimTor}.
			\end{proof}
			
			\begin{remark}\label{beware}
				Beware that Prop. \ref{DIM} does not provide the dimension of \emph{any} algebraic geometric code assodicated to a divisor $D$ on $\bfX_P$. The result holds if and only if the polytope associated to $D$ has the same normal fan than $P$, which is equivalent for $D$ to be very ample on $\bfX_P$. If this is not the case, the kernel may not contain only binomials (see \cite{nardi:code} for example on Hirzebruch surfaces).
			\end{remark}
			
			\begin{example}
				Let $a$, $b$ and $\eta$ be three positive integers. Consider the quadrilateral $P$ whose vertices are $(0,0)$, $(a,0)$, $(a,b)$ and $(0,b+\eta a)$ (See Fig. \ref{Hirz}). Such a trapezoid describe a toric surface parametrized by the integer $\eta$ called a Hirzebruch surface (See Fig. 2 \cite{HanToric}). 
				
				The black dots are the lattice points of $\overline{Q^\circ}$ for every face $Q$ of $P$. Figures \ref{Hirz1} and \ref{Hirz2} respectively display points of $\overline{P^\circ}$ and those of the interior of edges of $P$. Figure \ref{Hirz3} shows the final reduction face by face of $P$, when adding the vertices to the two previous steps.
				
				The number of black dots is thus equal to the dimension of the code on $\F_7$, whose explicit formula depending on $(a,b,\eta)$ is given by \cite[Th. 2.4.1]{nardi:code}.
				
				\begin{figure}[h]
					\centering
					\begin{subfigure}[b]{0.3\textwidth}
						\centering
					\includegraphics{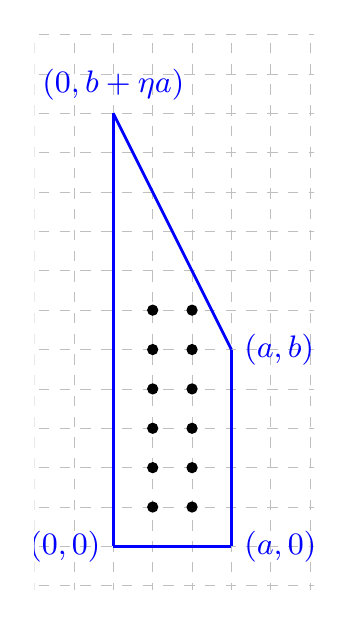}
						\caption{Reduction of $P^\circ$}\label{Hirz1}
					\end{subfigure}
					\begin{subfigure}[b]{0.3\textwidth}
						\centering
						\includegraphics{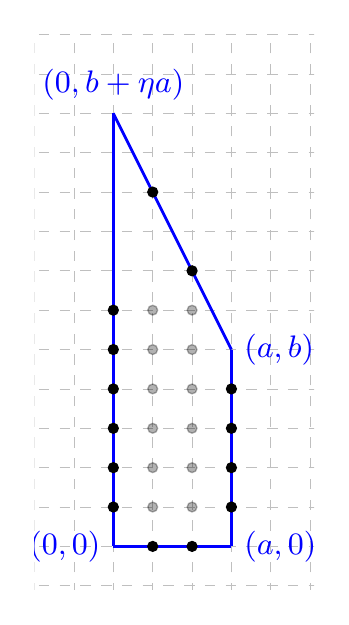}
						\caption{Reduction of edges}\label{Hirz2}
					\end{subfigure}%
					\begin{subfigure}[b]{0.3\textwidth}
						\centering
						\includegraphics{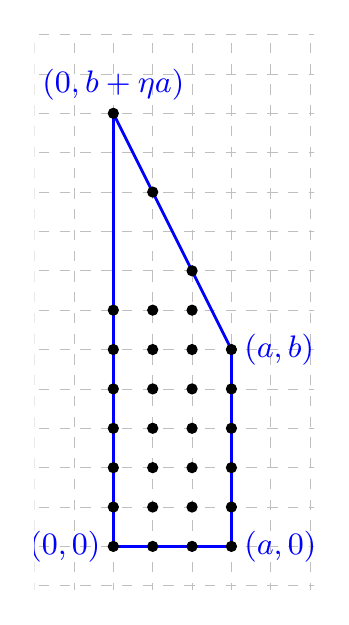}
						\caption{Total reduction of $P$}\label{Hirz3}
					\end{subfigure}%
					\caption{Reduction face by face of a polygon associated to a Hirzebruch surface $\mathcal{H}_2$ for $q=7$}\label{Hirz}	
					
				\end{figure}
			\end{example}
			
			\section{Gr\"obner basis theory in the Cox Ring}
			
			To compute a lower bound for the minimum distance, we use a technique similar to the one used in the study of Hirzebruch surfaces \cite{nardi:code}: we estimate the dimension of a quotient of vectors spaces. We shall take advantage of the graded structure of the Cox ring and use Gr\"obner basis theory, which may be powerful in this framework as it provides a basis of the quotient of a ring modulo an ideal.
			Let us first recall some facts about Gr\"obner bases (See \cite{Cox} for details).
			
			Let $R$ be a polynomial ring. Denote by $\M$ the set of monomials of $R$. A \emph{monomial order} is a total order on $\M$, denoted by $<$ that is compatible with the multiplication and such that $1 < M$ for any $M \in \M$. For every polynomial $f \in R$, we call the \emph{leading monomial} of $f$ and denote by $\LM(f)$ the greatest monomial for this ordering that appears in $f$. 
			
			Let $I$ be an ideal of a polynomial ring $R$ over a field\footnote{We assume $R$ to be a polynomial ring over a \emph{field} to avoid the distinction between leading term and leading monomial.}, endowed with a monomial order $<$. The \emph{monomial ideal} $\LM(I) \subset R$ associated to $I$ is the ideal generated by the leading monomials $\LM(f)$ of all the polynomials $f \in I$.  A finite subset $G$ of an ideal $I$ is a \emph{Gr\"obner basis} of the ideal $I$ if $\LM(I)=\left\langle \LM(g) \: | \:  g \in G \right\rangle$.
			
			The pleasing property of Gr\"obner bases that will be used to estimate the minimum distance of the code is the following.
			
			\begin{proposition}{\cite[ Prop. 1.1]{Sturm}}\label{Grob}
				Let $I$ be an ideal of a polynomial ring $R$ over a field $\K$ with Gr\"obner basis $G$. Then, setting $\pi$ as the canonical projection of $R$ onto $\faktor{R}{I}$, the set
				\[\{\pi(M), \: M \in \M \text{ s.t. } \forall \: g \in G, \: \LM(g) \mid M\}\]
				is a basis of $\faktor{R}{I}$ as a $\K$-vector space.
			\end{proposition}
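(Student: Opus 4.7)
The plan is to prove this classical statement in two pieces: spanning via the multivariate division algorithm, and linear independence via the defining property of Gr\"obner bases. Note that the statement as intended concerns the monomials $M$ such that \emph{no} $\LM(g)$ (for $g \in G$) divides $M$ — the so-called \emph{standard monomials} with respect to the ideal $\LM(I)$.

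For spanning, I would apply the multivariate division algorithm to an arbitrary $f \in R$, using $G$ as divisors. This yields an expression $f = \sum_{g \in G} q_g g + r$, where every monomial appearing in the remainder $r$ is divisible by no $\LM(g)$, $g \in G$. Since $\sum_{g \in G} q_g g \in I$, we have $\pi(f) = \pi(r)$, which exhibits $\pi(f)$ as a $\K$-linear combination of images of standard monomials. Termination of the algorithm relies on the well-ordering property of the monomial order $<$, which forbids infinite strictly-descending chains of leading monomials.

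For linear independence, suppose a relation $\sum_M c_M \pi(M) = 0$ holds in $\faktor{R}{I}$, with each $M$ a standard monomial, $c_M \in \K$, and only finitely many $c_M$ nonzero. Then $h := \sum_M c_M M \in I$. Assume for contradiction that $h \neq 0$, and set $M_0 = \LM(h)$, which is one of the $M$ with $c_{M_0} \neq 0$. From $h \in I$ we get $M_0 \in \LM(I)$, and the Gr\"obner basis property $\LM(I) = \langle \LM(g) \mid g \in G \rangle$ forces $\LM(g)$ to divide $M_0$ for some $g \in G$. This contradicts $M_0$ being standard, so $h = 0$ and every $c_M$ vanishes.

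The only real subtlety is bookkeeping: the multivariate division algorithm is more delicate than its univariate counterpart, because both the quotients $q_g$ and the remainder $r$ depend on the order in which the divisors are used. However, once $G$ is a Gr\"obner basis, the remainder modulo $I$ is uniquely determined, and this is all the argument requires. Since the result is quoted verbatim from \cite{Sturm}, I would keep the proof short, invoking the division algorithm and the defining property of a Gr\"obner basis without redeveloping them from scratch.
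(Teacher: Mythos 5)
Your proof is correct: it is the standard argument (division algorithm for spanning, the Gr\"obner property $\LM(I)=\langle \LM(g)\mid g\in G\rangle$ plus the membership criterion for monomial ideals for independence), and the paper itself offers no proof, simply citing \cite{Sturm}. You were also right to read the divisibility condition in the statement as $\LM(g)\nmid M$ — the condition as printed is a typo, as the later uses in Lemmas \ref{NiceBG} and \ref{CdtDiv} confirm.
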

			
			\subsection{Application in the Cox ring}
			
			Let us set a monomial order on the Cox Ring of $\bfX_P$ by using its grading by $\Pic \bfX_P$ and the correspondence between lattice points in $P_D$ and element of $R_{[D]}$ (See Par. \ref{SecCoxRing}).
			
			\begin{notation}\label{not:order}
				Fix a total order on $\Pic \bfX_P$ and one on $\Z^N$ that is compatible with the addition, both denoted by $<$, from which we define a total order $<$ on $R$ as follows.
				
				Given $D_1$ and $D_2$ two divisors on $\bfX_P$ and $m_1, \: m_2$ two tuples in $\Z^N$ such that $m_i \in P_{D_i}$, we set $\chi^{\ps{m_1}{D_1}} < \chi^{\ps{m_2}{D_2}}$ if one of the following holds:
				\begin{enumerate}
					\item $[D_1] < [D_2]$ in $\Pic \bfX_P$,
					\item $[D_1] = [D_2]$ (then $\exists m \in \Z^n$, such that $P_{D_1}=m+P_{D_2}$) and $m_1 < m+m_2$. 
				\end{enumerate}
			\end{notation}
			
			In the Cox ring $R$, we define $I$ as the homogeneous vanishing ideal in the subvariety consisting of the $\Fq$-rational points of $\bfX_P$. As the Cox ring is graded by the Picard group of the variety, the homogeneous component of $I$ of ``degree'' $\alpha \in \Pic \bfX_P$ is isomorphic to the kernel of $\ev_{D}$, for any $D \in \alpha$.

			Provided a monomial order on $R$, Prop. \ref{Grob} gives a basis of $\faktor{R}{I}$ as $\Fq$-vector space. Taking the elements of degree $[D]$ gives a basis of the homogeneous component $\faktor{R_{[D]}}{I_{[D]}}$. Since 
			\[\faktor{R_{[D]}}{I_{[D]}}\simeq \faktor{L(D)}{\ker \ev_P}\:,\]
			Theorem \ref{DIM} states that a projective reduction of $P$ provides a basis of this same vector space. We would like to make these two bases coincide by choosing a projective reduction of $P$ that takes into account the monomial order we consider.
			
			\begin{definition}
				Let $<$ a total order on $\Z^N$. Let $P$ be a lattice polytope and $m \in P \cap \Z^N$. Define $\Red_<(m,P)$ as the minimum lattice point of $P$ under the order $<$ such that $\Red_<(m,P) \sim_P m$. By extension, we define the \emph{projective reduction of $P$ with respect to the order $<$}:
				\[\Red_<(P)= \{ \Red(m,P), \: m \in P \cap \Z^N\}.\]
			\end{definition}
			
			\begin{example}
				The points displayed in Fig. \ref{Hirz} are in fact the elements of $\Red_<(P)$ for the lexicographic order.
			\end{example}

			\begin{lemma}\label{NiceBG}
				Given an order $<$ on the Cox ring as in Not. \ref{not:order}, there exists a Gr\"obner basis $\mathcal{G}$ of the ideal $I$ such that 
				\[		\{ \chi^\ps{m}{P}, \: m \in \Red_<(P) \}=\{M \in \M_D \st \forall g \in \mathcal{G}, \LM(g) \nmid M\}.
				\]
			\end{lemma}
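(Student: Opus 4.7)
The plan is to construct a Gröbner basis $\mathcal{G}$ of $I$ that \emph{contains} the binomials supplied by Theorem \ref{DIM}, and then to conclude by a short dimension count that the standard monomials of degree $[D]$ match the prescribed set.

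For every $m \in (P \cap \Z^N) \setminus \Red_<(P)$, form the binomial
\[
b_m := \chi^{\ps{m}{P}} - \chi^{\ps{\Red_<(m,P)}{P}}.
\]
By Theorem \ref{DIM}, all $b_m$ lie in $\ker(\ev_P) = I_{[D]} \subseteq I$. The definition of $\Red_<(m,P)$ as the minimum of its $\sim_P$-class forces $m > \Red_<(m,P)$ in the chosen order on $\Z^N$; since both monomials of $b_m$ share the Picard degree $[D]$, the second clause of Notation \ref{not:order} (with translation $m=0$) yields $\LM(b_m) = \chi^{\ps{m}{P}}$.

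Starting from any Gröbner basis $\mathcal{G}_0$ of $I$ (which exists since $R$ is Noetherian), set $\mathcal{G} := \mathcal{G}_0 \cup \{b_m : m \in (P \cap \Z^N) \setminus \Red_<(P)\}$. This remains a Gröbner basis of $I$: adding elements of the ideal can only enlarge $\langle \LM(\mathcal{G})\rangle$ while keeping all new leading monomials inside $\LM(I)$, so the equality $\langle \LM(\mathcal{G})\rangle = \LM(I)$ persists.

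To finish, write $S_1 := \{\chi^{\ps{m}{P}} : m \in \Red_<(P)\}$ and $S_2 := \{M \in \M_D : \forall g \in \mathcal{G},\ \LM(g) \nmid M\}$. Applying Proposition \ref{Grob} and extracting the degree-$[D]$ component shows the classes of $S_2$ form a basis of $R_{[D]}/I_{[D]}$; Theorem \ref{DIM} gives the same conclusion for the classes of $S_1$, hence $\#S_1 = \#S_2$. Moreover, for any $m \in (P \cap \Z^N) \setminus \Red_<(P)$, the leading monomial $\LM(b_m) = \chi^{\ps{m}{P}}$ trivially divides itself, so $\chi^{\ps{m}{P}} \notin S_2$; this gives $S_2 \subseteq S_1$, and the cardinality equality upgrades the inclusion to $S_1 = S_2$. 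The only delicate point is the preservation of the Gröbner property under augmentation; once granted, no control is needed on the leading monomials of $\mathcal{G}_0$ in degrees less than $[D]$, since the dimension equality automatically forces any incidental exclusions to land precisely on $\M_D \setminus S_1$.
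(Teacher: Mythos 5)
Your proposal is correct and follows essentially the same route as the paper: adjoin the binomials $\chi^{\ps{m}{P}}-\chi^{\ps{\Red_<(m,P)}{P}}$ (whose leading monomials are the non-reduced monomials, by minimality of $\Red_<(m,P)$ and the order of Not.~\ref{not:order}) to a Gr\"obner basis of $I$, note this preserves the Gr\"obner property, and conclude by comparing the two sets via the inclusion and the fact that both index bases of $\faktor{R_{[D]}}{I_{[D]}}$. The only cosmetic difference is that the paper states the inclusion as $S_1\supseteq S_2$ directly from the leading monomials of the binomials and then invokes that both sets are bases, which is the same cardinality argument you spell out.
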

			
			\begin{proof}
				By Prop. \ref{DIM}, the set 
				\[\mathcal{B}_D:=\{\chi^\ps{m}{P}-\chi^\ps{\Red_<(m,P)}{P}, \: m \in P \st m \neq \Red_<(m,P)\} \]
				forms a a basis of $I_{[D]}\subset I$, where $D$ is the divisor associated to $P$. Take a Gr\"obner basis $\mathcal{G}$ of $I$ that contains $\mathcal{B}_D$. It is always possible: a Gr\"obner basis of $I$
				to which we add element of $I$ remains a Gr\"obner basis of $I$.
				
				The set of monomials $\{ \chi^\ps{m}{P}, \: m \in \Red_<(P) \}$ contains exactly the monomials that are not divisible by any $\LM(g)$ for any $g \in \mathcal{B}_D$. Then, we have 
				\[		\{ \chi^\ps{m}{P}, \: m \in \Red_<(P) \}\supseteq\{M \in \M_D \st \forall g \in \mathcal{G}, \LM(g) \nmid M\}.
				\]
				Both of these sets being some bases of the vector space $\faktor{R_{[D]}}{I_{[D]}}$, they are thus equal.

			\end{proof}
			
			The result brought by the Gr\"obner basis theory involves divisibility relation in the polynomial ring we consider, here the Cox ring of $\bfX_P$. The following lemma takes advantage of the correspondence between monomials in this Cox ring and lattice points of polytopes to translate divisibility of monomials into a relation between the corresponding points in $\Z^N$.
					
			\begin{lemma}\label{CaracDiv}
				Take two divisors $D_1$ and $D_2$ of $\bfX_P$ and their associated polytopes $P_{D_1}=P_1$ and $P_{D_2}=P_2$ (defined in Eq. \eqref{polydiv}). Then, in the Cox ring of $\bfX_P$, for every $m_1 \in P_1\cap \Z^N$ and $m' \in P_2\cap \Z^N$,
				\[\chi^\ps{m_1}{P_1} \mid \chi^\ps{m_2}{P_2} \: \Leftrightarrow \: m_2-m_1 \in P_2-P_1 \]
			\end{lemma}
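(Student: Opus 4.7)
The plan is to unfold both sides of the claimed equivalence using the explicit definitions from equations \eqref{polydiv} and \eqref{monhom} and show that they give identical systems of facet inequalities indexed by $F \in P(N-1)$.

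First I would fix notation: write $D_1 = \sum_{F} \alpha_F^{(1)} D_F$ and $D_2 = \sum_{F} \alpha_F^{(2)} D_F$, so that by \eqref{polydiv}
\[
P_i = \{m \in \R^N : \langle m, u_F\rangle \geq -\alpha_F^{(i)} \text{ for every } F\in P(N-1)\}
\]
for $i \in \{1,2\}$, and by \eqref{monhom}
\[
\chi^{\ps{m_i}{P_i}} = \prod_{F \in P(N-1)} X_F^{\langle m_i, u_F\rangle + \alpha_F^{(i)}}.
\]
Since the $X_F$ are algebraically independent variables in the Cox ring $R$, divisibility of one monomial by another is equivalent to a coordinate-wise inequality on exponents. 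Hence $\chi^{\ps{m_1}{P_1}} \mid \chi^{\ps{m_2}{P_2}}$ is equivalent to
\[
\langle m_1, u_F\rangle + \alpha_F^{(1)} \leq \langle m_2, u_F\rangle + \alpha_F^{(2)} \quad \text{for every } F \in P(N-1),
\]
which I rewrite as $\langle m_2 - m_1, u_F\rangle \geq \alpha_F^{(1)} - \alpha_F^{(2)}$.

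Next I would identify the right-hand side $P_2 - P_1$ with the polytope $P_{D_2 - D_1}$ of the divisor $D_2 - D_1 = \sum_F (\alpha_F^{(2)} - \alpha_F^{(1)}) D_F$. By \eqref{polydiv} applied to $D_2 - D_1$,
\[
P_{D_2 - D_1} = \{m \in \R^N : \langle m, u_F\rangle \geq -(\alpha_F^{(2)} - \alpha_F^{(1)}) \text{ for every } F\},
\]
which is exactly the condition $\langle m_2 - m_1, u_F\rangle \geq \alpha_F^{(1)} - \alpha_F^{(2)}$ obtained above with $m = m_2 - m_1$. So the two conditions match facet by facet and the equivalence follows.

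The only genuine subtlety is the reading of the symbol $P_2 - P_1$: one has to confirm that it denotes $P_{D_2 - D_1}$ (equivalently, the Minkowski difference compatible with the polytope/divisor dictionary), not a naive set-theoretic difference. Since the statement would otherwise be false for general polytopes, this convention is forced by the context of \eqref{polydiv}, and once adopted the proof reduces to the identification above. No obstacle beyond bookkeeping remains.
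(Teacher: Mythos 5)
Your proof is correct and takes essentially the same route as the paper: write out the exponents of the two monomials via \eqref{monhom}, translate divisibility in the Cox ring into the coordinate-wise inequalities $\ps{m_1}{u_F}+a_{1,F}\leq \ps{m_2}{u_F}+a_{2,F}$ for all facets $F$, and read these as the facet description \eqref{polydiv} of the polytope attached to $D_2-D_1$. Your closing remark about interpreting $P_2-P_1$ as the polytope of the divisor $D_2-D_1$ (rather than a naive set difference) is exactly the convention the paper uses implicitly, so nothing is missing.
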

			
			\begin{proof} Write $D_i=\sum_F a_{i,F} D_F$ $(i=1,2)$. The monomial $\chi^\ps{m_1}{P_1}$ divides $\chi^\ps{m_2}{P_2}$ if and only if for every $ F \in P(N-1)$, we have
				\[\ps{m_1}{u_F}+a_{1,F} \leq \ps{m_2}{u_F}+a_{2,F} \:  \Leftrightarrow \: \ps{m_2-m_1}{u_F} \geq -(a_{2,F}-a_{1,F}).\]
				This exactly means that $m_2-m_1 \in P_2-P_1$ by \eqref{polydiv}.
			\end{proof}	
			
			\section{Minimum distance}
			
			The dimension of the code $\PC_P$ has been related to the lattice points of the polytope $P$ we hope for a similar kind of result for the minimum distance. Here another polytope comes into play.
			
			\begin{definition}\label{DefSurj}
				Let $P$	be a lattice polytope. A lattice polytope $P'$ is said to be \emph{$P-$surjective} if it has the same normal fan than $P$, contains $P$ and $\PC_{P'}=\Fq^{\#\bfX_P(\Fq)}$.
			\end{definition}
			
			Fact \ref{IsomVar} ensures that a $P-$surjective polytope defines the same variety as $P$. Asking a $P-$surjective polytope to contain $P$ enables us to embed $L(D_P)$ into $L(D_{P'})$. Finally, Prop. \ref{DIM} entails that the reduction modulo $q-1$ of the interior lattice points of each $k$-dimensional faces of $P'$ has $(q-1)^k$ elements.
			
			Given a $P-$surjective polytope and an order on $\Z^N$, one can deduce a lower bound of the minimum distance, as stated by the following theorem.
			
			\begin{theorem}\label{DIST}
				Let $P_\surj$ be a $P-$surjective polytope and let $<$ be an order on $\Z^N$ that is compatible with the addition. Then the minimum distance of the code $\PC_P$ satisfies
				\[d(\PC_P)\geq \min_{m \in \Red_<(P)} \# \left((m + P_\surj-P)\cap \Red_<(P_\surj)\right).\]
			\end{theorem}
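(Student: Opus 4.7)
The approach is a footprint-style bound on the size of the vanishing locus of a homogeneous section, using the Gröbner basis supplied by Lemma \ref{NiceBG} together with the $P$-surjectivity of $P_\surj$ as an ambient evaluation. Fix a nonzero codeword $c = \ev_P(f)$ with $f \in L(D_P) = R_{[D_P]}$. Its weight equals $n - \#Z(f)$, where $Z(f) \subseteq \bfX_P(\Fq)$ is the set of $\Fq$-points at which the section $f$ vanishes, so I need to upper bound $\#Z(f)$. Because $\ker \ev_P = I_{[D_P]}$ by Theorem \ref{DIM}, I may subtract from $f$ any element of $I_{[D_P]}$ without changing $c$; reducing modulo a Gröbner basis $\mathcal{G}$ of $I$ of the type provided by Lemma \ref{NiceBG} then lets me assume $\LM(f) = \chi^{\ps{m_0}{P}}$ for some $m_0 \in \Red_<(P)$.

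Next I set $J_\surj := I_{[D_{P_\surj}]} + f \cdot R_{[D_{P_\surj} - D_P]}$, which is the degree-$[D_{P_\surj}]$ component of the homogeneous ideal $I + \langle f \rangle$. Every element of $J_\surj$ vanishes on $Z(f)$: the summand $I_{[D_{P_\surj}]}$ vanishes on all of $\bfX_P(\Fq)$, and $f g$ vanishes wherever $f$ does. By the definition of $P$-surjectivity, evaluation induces an isomorphism $R_{[D_{P_\surj}]}/I_{[D_{P_\surj}]} \simeq \Fq^n$, so the image of $J_\surj$ sits inside the subspace of $\Fq^n$ of vectors supported off $Z(f)$, whose dimension is $n - \#Z(f)$. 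Rearranging gives the key inequality $\#Z(f) \leq \dim_{\Fq} R_{[D_{P_\surj}]}/J_\surj$.

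To finish, I bound $\dim R_{[D_{P_\surj}]}/J_\surj$ from above by a footprint count. The set $\mathcal{G} \cup \{f\}$ generates $I + \langle f \rangle$, and a standard homogeneous division argument shows that the monomials of $R_{[D_{P_\surj}]}$ divisible by none of the $\LM(g)$ for $g \in \mathcal{G} \cup \{f\}$ span $R_{[D_{P_\surj}]}/J_\surj$, so their count is an upper bound on the dimension. By Lemma \ref{NiceBG}, the monomials indivisible by any $\LM(g)$ with $g \in \mathcal{G}$ are exactly the $\chi^{\ps{m}{P_\surj}}$ with $m \in \Red_<(P_\surj)$; moreover $\#\Red_<(P_\surj) = n$ since $P_\surj$ is $P$-surjective (Theorem \ref{DIM} applied to $P_\surj$, combined with $\bfX_P \cong \bfX_{P_\surj}$ via Fact \ref{IsomVar}). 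Among these, Lemma \ref{CaracDiv} identifies those further divisible by $\LM(f) = \chi^{\ps{m_0}{P}}$ with the points $m \in (m_0 + P_\surj - P) \cap \Red_<(P_\surj)$. Hence
\[
\#Z(f) \;\leq\; n - \#\bigl((m_0 + P_\surj - P) \cap \Red_<(P_\surj)\bigr),
\]
so the weight of $c$ is at least $\#\bigl((m_0 + P_\surj - P) \cap \Red_<(P_\surj)\bigr)$, and taking the minimum over $m_0 \in \Red_<(P)$ yields the claimed bound.

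The main technical obstacle I anticipate is justifying the standard-monomial spanning step in the \emph{graded} setting: the Cox ring is graded by $\Pic \bfX_P$ rather than by $\Z$, so one must verify that dividing a homogeneous element of degree $[D_{P_\surj}]$ by homogeneous elements of $\mathcal{G} \cup \{f\}$ yields a remainder that is again homogeneous of degree $[D_{P_\surj}]$, and that the monomial order of Notation \ref{not:order} (which compares Picard classes first and lattice points second) interacts correctly with this division. The rest is a bookkeeping exercise translating divisibility of monomials into Minkowski containment via Lemma \ref{CaracDiv}.
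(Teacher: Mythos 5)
Your proof is correct and takes essentially the same route as the paper's: your key inequality $\#\mathcal{Z}(f)(\Fq)\leq \dim R_{[D_\surj]}/\bigl(I_{[D_\surj]}+f\cdot R_{[D_\surj-D]}\bigr)$ is Lemma \ref{DistQuot} restated (the paper phrases it via the surjection $\ev_{P_\surj,f}$ coming from $P$-surjectivity), and your footprint count through Lemmas \ref{NiceBG} and \ref{CaracDiv} is exactly Lemma \ref{CdtDiv}. The only cosmetic differences are that you normalize $\LM(f)$ into $\Red_<(P)$ at the outset rather than at the end, and you use the spanning property of the monomials avoiding the leading terms of $\mathcal{G}\cup\{f\}$ directly, whereas the paper extends $\mathcal{G}\cup\{f\}$ to a Gr\"obner basis $\widehat{\mathcal{G}}$ of $I+\langle f\rangle$ and restricts the standard-monomial basis to degree $[D_\surj]$ — a step that also disposes of the graded-division concern you raise at the end.
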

			
			The proof of Th. \ref{DIST} is splitted in two parts. Lem. \ref{DistQuot} gives a lower bound  of $d(\PC_P)$ in terms of the dimension of a quotient of vector spaces. Lem. \ref{CdtDiv} benefits from results of the previous section about Gr\"obner bases to handle this dimension as number of points in the polytope $P_\surj$.

			\begin{lemma}\label{DistQuot}
				Call $D$ (resp. $D_\surj$) the divisor on $\bfX_P$ such that $P=P_D$ \eqref{polydiv} (resp. $P_\surj=P_{D_\surj}$).
				
				For any $f \in L(D) \setminus \ker \ev_P$, we define 
				\[\widetilde{n_f}:= \dim \left(\faktor{L(D_\surj)}{\ker \ev_{P_\surj} + F \cdot L(D_\surj - D)}\right).\]
				Then the minimum distance $\displaystyle d(\PC_P)$ satisfies 
				$\displaystyle{n-d(\PC_P) \leq \max_{\substack{f \in L(D) \\ f \notin  \ker \ev_P}} \widetilde{n_f}}$.
			\end{lemma}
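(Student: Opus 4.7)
The plan is to translate the bound on $n - d(\PC_P)$, which classically equals the maximum number of $\Fq$-rational zeros of a nonzero codeword, into a statement about the quotient $\widetilde{n_f}$ by leveraging the $P$-surjectivity of $P_\surj$ and the multiplicative structure of the Cox ring.

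First I would recall that $n - d(\PC_P) = \max\{\# Z(f) : f \in L(D) \setminus \ker\ev_P\}$, where $Z(f) \subset \bfX_P(\Fq)$ is the set of $\Fq$-rational points at which $f$ vanishes. (The fact that vanishing at a point is intrinsic, despite the choice of representatives in $\A^r$, follows from the Remark after Def.~\ref{DefCode}.) So it is enough to establish $\# Z(f) \leq \widetilde{n_f}$ for each such $f$.

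Next I would exploit that $P_\surj$ and $P$ share a normal fan with $P \subseteq P_\surj$: this makes $D_\surj - D$ effective and its polytope equal to the Minkowski difference $P_\surj - P$, which still has the same normal fan. Via the Picard-class grading of the Cox ring, multiplication by $f$ yields a well-defined $\Fq$-linear map $\mu_f : L(D_\surj - D) \to L(D_\surj)$, $g \mapsto fg$, because $[D]+[D_\surj-D]=[D_\surj]$. The $P$-surjectivity of $P_\surj$ gives an isomorphism $L(D_\surj)/\ker\ev_{P_\surj} \cong \Fq^n$, and I would then observe that the image of $\mu_f$ inside this quotient is contained in
\[
V_f := \{w \in \Fq^n : w_p = 0 \text{ for all } p \in Z(f)\},
\]
because $(fg)(\bfx)=f(\bfx)\,g(\bfx)$ vanishes at any representative of a zero of $f$. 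Since $\dim V_f = n - \# Z(f)$, the computation
\[
\widetilde{n_f} \;=\; n - \dim \ev_{P_\surj}\bigl(\mu_f(L(D_\surj - D))\bigr) \;\geq\; n - (n - \# Z(f)) \;=\; \# Z(f)
\]
follows, and maximizing over $f$ closes the argument.

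The hard part is not computational but bookkeeping: one must check that $D_\surj - D$ is a genuine effective divisor whose sections correspond to lattice points of a polytope with the same normal fan as $P$, so that $\mu_f$ really lands in $L(D_\surj)$, and that the Cox-ring product $fg$ evaluates pointwise as $f(\bfx)\,g(\bfx)$ on any chosen representative. Both are guaranteed by the definition of \emph{$P$-surjective} (same normal fan and containment $P \subseteq P_\surj$) together with Par.~\ref{SecCoxRing}; once this is in place, the proof is pure linear algebra.
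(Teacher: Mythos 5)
Your proof is correct and takes essentially the same route as the paper: both rest on the identity $n-d(\PC_P)=\max_f \#\mathcal{Z}(f)(\Fq)$, on the surjectivity of $\ev_{P_\surj}$ guaranteed by $P$-surjectivity, and on the fact that multiples of $f$ evaluate to zero at the zeros of $f$. The only difference is presentational: the paper bounds $n_f$ from above via the surjective map $\ev_{P_\surj,f}$ onto $\Fq^{n_f}$, whose kernel contains $\ker\ev_{P_\surj}+f\cdot L(D_\surj-D)$, whereas you dually bound $\widetilde{n_f}$ from below by observing that the image of multiplication by $f$ lies in the subspace of vectors vanishing on $\mathcal{Z}(f)(\Fq)$ — the same linear algebra in mirror form.
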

			\begin{proof}
				As usual in the framework of algebraic geometric codes, we can write that
				\[d(\PC_P) = n - \max_{\mathclap{\substack{f \in L(D) \\ f \notin  \ker \ev_P}}} \: n_f\]
				where $n_f=\# \mathcal{Z}(f)(\Fq)$ is the number of $\Fq$-points in the zero set $\mathcal{Z}(f)$ of $f$ in $\bfX_P$.
				
				Fix a polynomial $f \in L(D) \setminus \ker \ev_P$. The map
				\[\ev_{P_\surj,f}: \left\{ \begin{array}{rcl} L(D_\surj) & \rightarrow & \Fq^{n_f} \\
					g & \mapsto & \left(g(p)\right)_{p \in \mathcal{Z}(f)(\Fq)}
				\end{array}\right.,\]
				is the composition of the evaluation $\ev_{P_\surj}$ and the projection onto the zero points of $f$. Then, it is surjective and
				\[n_f=\dim \left(\faktor{L(D_\surj)}{\ker \ev_{P_\surj,f}}\right).\] 
				The kernel of $\ev_{P_\surj,f}$, contains the kernel of the map $\ev_{P_\surj}$ and the multiples of $F$, which implies that $n_f \leq \widetilde{n_f}$ and concludes the proof.	
			\end{proof}
			
			We want to handle the quantity $\widetilde{n_f}$ for $f \in L(D) \setminus \ker \ev_P$. Thanks to Th. \ref{DIM}, we are able to pick a monomial basis $\mathcal{B}_{D_\surj}$ of $L(D_\surj)$ modulo $\ker \ev_{P_\surj}$ by reducing lattice points of $P_\surj$ modulo $q-1$ while taking into account the monomial order. We now pick some of these monomials to form a linearly independent family in $L(D_\surj)$ modulo $(\ker \ev_{P_\surj} + F \cdot L(D_\surj - D))$, which provides an easy bound on $n-\widetilde{n_f}$.

			\begin{lemma}\label{CdtDiv}
				For $f \in L(D)$, we denote by $m_f$ the lattice point of $P$ such that $\LM(F)=\chi^\ps{m_f}{P}$. Then
				\[n-\widetilde{n_f}\geq \#\{m \in \Red_<(P_\surj) \st m -m_f \in P_\surj-P\}.\]
			\end{lemma}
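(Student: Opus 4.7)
The plan is to produce, for each $m$ in the set
\[S := \{m \in \Red_<(P_\surj) \st m - m_f \in P_\surj - P\},\]
an element of $f\cdot L(D_\surj - D)$ whose class in $L(D_\surj)/\ker\ev_{P_\surj}$ is nonzero, and then to verify that the resulting $\#S$ classes are linearly independent. Since $P_\surj$ is $P$-surjective the evaluation $\ev_{P_\surj}$ is onto $\Fq^n$, so $n=\dim L(D_\surj)/\ker\ev_{P_\surj}$ and
\[n - \widetilde{n_f}\;=\;\dim \frac{\ker\ev_{P_\surj} + f\cdot L(D_\surj - D)}{\ker\ev_{P_\surj}}.\]
The lemma therefore reduces to exhibiting $\#S$ linearly independent classes in this quotient.

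For the construction, fix $m\in S$. Applying Lemma \ref{CaracDiv} with lattice points $m_f\in P$ and $m\in P_\surj$ shows that $\chi^\ps{m_f}{P}$ divides $\chi^\ps{m}{P_\surj}$ in the Cox ring, so I set $g_m:=\chi^\ps{m-m_f}{P_\surj-P}\in L(D_\surj-D)$, which satisfies $\chi^\ps{m_f}{P}\cdot g_m=\chi^\ps{m}{P_\surj}$. Because the order on $R$ is compatible with multiplication (Notation \ref{not:order}), I obtain $\LM(fg_m)=\LM(f)\cdot g_m=\chi^\ps{m}{P_\surj}$.

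For linear independence, I would suppose $h:=\sum_{m\in S}\lambda_m f g_m=f\cdot\bigl(\sum_{m\in S}\lambda_m g_m\bigr)$ lies in $\ker\ev_{P_\surj}\subseteq I$. The monomials $g_m$ are pairwise distinct and, since they all live in the single Picard-graded component $R_{[D_\surj-D]}$, the order on $\Z^N$ orders them exactly as the $m$'s themselves. Hence the leading monomial of $\sum_m\lambda_m g_m$ is $g_{m^*}$, where $m^*$ is the largest $m\in S$ with $\lambda_m\neq 0$. Multiplicativity of $<$ then yields $\LM(h)=\LM(f)\cdot g_{m^*}=\chi^\ps{m^*}{P_\surj}$. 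But $m^*\in\Red_<(P_\surj)$, so Lemma \ref{NiceBG} applied to $P_\surj$ provides a Gr\"obner basis $\mathcal G$ of $I$ none of whose leading monomials divides $\chi^\ps{m^*}{P_\surj}$. Since $h\in I$ forces $\LM(h)\in\LM(I)$, this is a contradiction unless every $\lambda_m$ vanishes, which gives the required independence.

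The main technical point to nail down is the multiplicativity $\LM(f\cdot p)=\LM(f)\cdot\LM(p)$ for the order of Notation \ref{not:order} when $p$ is homogeneous for the Picard grading: this is what pins $\LM(fg_m)$ to $\chi^\ps{m}{P_\surj}$ and keeps the Gr\"obner-basis/footprint argument honest. Once that compatibility is checked across the relevant graded pieces, everything else is a routine leading-monomial computation.
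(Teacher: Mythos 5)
Your proof is correct, but it reaches the bound by a genuinely different route than the paper. The paper bounds $\widetilde{n_f}$ from above: it extends the Gr\"obner basis $\mathcal{G}$ of Lem. \ref{NiceBG} to a Gr\"obner basis $\widehat{\mathcal{G}}$ of $I+\langle f\rangle$ containing $f$, applies Prop. \ref{Grob} to the degree-$[D_\surj]$ component to obtain a monomial basis of $L(D_\surj)$ modulo $\ker \ev_{P_\surj}+f\cdot L(D_\surj-D)$ of cardinality $\widetilde{n_f}$, notes that these standard monomials sit inside $\{\chi^\ps{m}{P_\surj} \mid m\in\Red_<(P_\surj)\}$ and avoid the multiples of $\LM(f)$, and concludes with Lem. \ref{CaracDiv} and $n=\#\Red_<(P_\surj)$. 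You instead bound $n-\widetilde{n_f}=\dim\bigl((\ker\ev_{P_\surj}+f\cdot L(D_\surj-D))/\ker\ev_{P_\surj}\bigr)$ from below by exhibiting the explicit elements $f\,\chi^\ps{m-m_f}{P_\surj-P}$ for $m$ in your set $S$ and proving independence of their classes by a leading-monomial argument: a nontrivial dependency would be a nonzero element of $I$ whose leading monomial is $\chi^\ps{m^*}{P_\surj}$ with $m^*\in\Red_<(P_\surj)$, contradicting Lem. \ref{NiceBG} applied to $P_\surj$. Both arguments rest on Lem. \ref{CaracDiv} and on Lem. \ref{NiceBG} for $P_\surj$; yours buys a more self-contained proof that never constructs a Gr\"obner basis of the augmented ideal, needs no second appeal to Prop. \ref{Grob}, and sidesteps the identification of $(I+\langle f\rangle)_{[D_\surj]}$ with $\ker\ev_{P_\surj}+f\cdot L(D_\surj-D)$, at the price of an explicit linear-independence computation; the paper's version is shorter once Prop. \ref{Grob} is granted. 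The multiplicativity $\LM(fg)=\LM(f)\LM(g)$ that you flag as the remaining technical point is not an extra hypothesis relative to the paper: it is precisely the ``compatible with the multiplication'' requirement in the paper's definition of a monomial order, which the paper's own Gr\"obner arguments for the order of Not. \ref{not:order} already presuppose, and your comparison of the monomials $g_m$ within the single graded component $R_{[D_\surj-D]}$ uses only the translation-invariance of the order on $\Z^N$ assumed there.
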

			
			\begin{proof}
				
				As the polynomial $f \in L(D)$ is a homogeneous element, the sum of the vanishing ideal $I$ of $\bfX(\Fq)$ and the ideal generated by $f$ in $R$ is also homogeneous. Let $\widehat{\mathcal{G}}$ be a Gr\"obner basis of the ideal $I + \left\langle f \right\rangle$ that contains the Gr\"obner basis $\mathcal{G}$ of $I$ provided by Lem. \ref{NiceBG} and the polynomial $f$. Using Prop. \ref{Grob} and restricting to the component of degree $[D_\surj]$ of $\faktor{R}{I}$, we can deduce that the set
				
				\[\mathcal{M}_f:=\{M \in \M_{D_\surj} \st\forall \: h \in \widehat{\mathcal{G}}, \: \LM(h) \nmid M\}\]
				is a basis of $L(D_\surj)$ modulo $\ker \ev_{P_\surj} + F \cdot L(D_\surj - D)$ of cardinality $\widetilde{n_f}$. Since $\mathcal{G} \subset \widehat{\mathcal{G}}$ and $f \in \widehat{\mathcal{G}}$,
				\[		\mathcal{M}_f \subseteq \{M \in \M_{D_\surj} \st \forall g \in \mathcal{G}, \: \LM(g) \nmid M \AND \LM(f) \nmid M\}.
				\]
				Applying Lem. \ref{NiceBG}, we get
				\[		\mathcal{M}_f \subseteq \{\chi^\ps{m}{P_\surj}, \: m \in \Red_<(P_\surj)  \st \chi^\ps{m_f}{P} \nmid \chi^\ps{m}{P_\surj}\}.
				\]
				
				Since $n = \# \Red_<(P_\surj)$, we have 
				\[n-\tilde{n_f} \geq \# \{m \in \Red_<(P_\surj) \st \chi^\ps{m_f}{P} \mid \chi^\ps{m}{P_\surj}\},\]
				which, thanks to Lem. \ref{CaracDiv}, gives the expected result.
				
				\medskip
			\end{proof}
			
			Combining the two previous lemmas, we are now able to prove Th. \ref{DIST}.
			
			\begin{proof}[Proof of Th. \ref{DIST}]
				By Lem. \ref{DistQuot},
				\[d(\PC_P)\geq \min_{\substack{f \in L(D)\\ f \notin \ker \ev_P}} n - \widetilde{n_f}.\]
				Lem. \ref{CdtDiv} gives a lower bound of $n - \widetilde{n_f}$ in terms of the lattice point $m_f \in P$ such that $\LM(f)=\chi\ps{m_f}{P}$. However, by Lem. \ref{NiceBG}, we can assume that $m_f \in \Red_<(P)$, as the associated monomials in $L(D)$ form a basis of $L(D)$ modulo $\ker \ev_P$. This proves that it is enough to take the minimum over the lattice points of $\Red_<(P)$.
			\end{proof}
			
			For each couple $(<,P')$ of an order on $\Z^N$ and a $P-$surjective polytope, Th. \ref{DIST} provides a lower bound for the minimum distance of $\PC_P$. Computating $\Red_<(P_\surj)$ is easier when $P_\surj$ is not too big but finding a small $P-$surjective polytope may not be trivial, as illustrated in Paragraph \ref{SecToyEx}. It is thus more reasonable to change the ordering. We shall avoid computing several times the reduction of $P$ and $P_\surj$ by computing the ``classes'' modulo $q-1$ once and for all and then taking the minimum elements with respect to different orderings. It may happend that the bound gets sharper when changing the order.

			\section{Examples}		
			
			\subsection{A toy example}\label{SecToyEx}
			
			Let us work out a toy example to illustrate the results on the dimension and the minimum distance in combinatorial terms . Let $P$ be the triangle of vertices $(0,0),(1,0)$ and $(-2,3)$. We want to determine the parameters of the code $\PC_P$ on $\F_4$. The dimension of this code is equal to the number of lattice points of $P$, since $P$ is entirely contained in the square $[0,\dots,q-1]^2$. 
			
			For the minimum distance, we look for a $P-$surjective polygon in the form $P_\surj=\lambda P$  with $\lambda \in \N$. Note that  $\lambda \geq 4$ so that $P_\surj$ has at least $q-1=3$ interior lattice points of each of its edges.
			
			Let us detail the projective reduction modulo $q-1$ of $4P$ (See Fig. \ref{Red4P}). Each edge contains at least 3 points in distinct classes modulo $q-1$ in its interior. Empty circles represent such points that are minimal for the lexicographic order. However, the reduction of the interior of $4P$ contains only 8 classes, whose representatives are depicted by diamonds. Points belonging to the same class are linked by a path in Fig. \ref{Red4P}. A $P-$surjective 	polygon is expected to have $(q-1)^2=9$ classes for its interior lattice points. Therefore, the polygon $4P$ is not $P-$surjective and we need to choose $\lambda$ larger. One can easily see that $\lambda=5$ suits.

			\begin{figure}[ht]
				\centering

				\subcaptionbox{Reduction and classes of $4P$\label{Red4P}}[.4\linewidth]{
					\centering
					\includegraphics{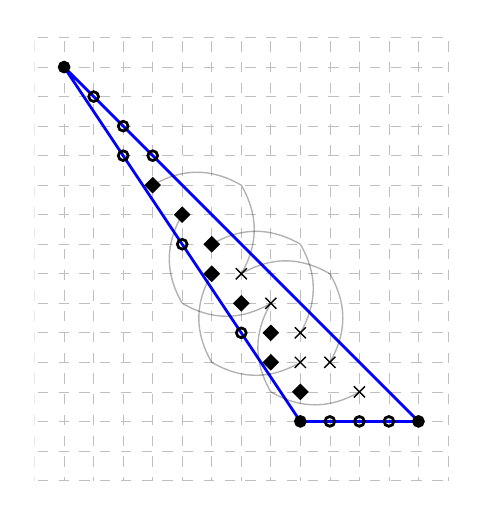}}%
				\subcaptionbox{Reductions of $P$ (black dots) and $5P$ (circles)\label{ToyEx}}[.6\linewidth]{
					\centering
					\includegraphics{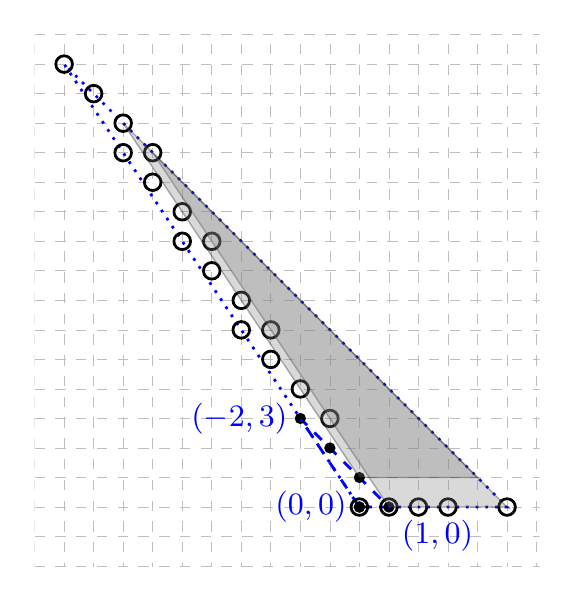}	}
				
				\caption{Reductions of $P$, $4P$ and $5P$ for $P=\Conv((0,0),(1,0),(-2,3))$ on $\F_4$ with respect to the lexicographic order}
			\end{figure}
		
		Now we can bound the minimum distance: make the projective reduction of $P$ and $P_\surj=5P$ with respect to an ordering $<$ on $\Z^2$ \-- here the lexicographic order \-- (see Fig. \ref{ToyEx}) to compute the minimum of Th. \ref{DIST}. 
		It is reached for $m \in \{(1,0), (0,1)\}$: the  cardinality of $(P_\surj - P +m) \cap \Red_<(P_\surj)$ equals 8. A computation with \Magma \: ensures that it is the actual minimum distance.
		
		Choosing $P\surj=\lambda P$ with $\lambda \geq 5$ leads to the same lower bound but the reduction of $P_\surj$ become heavier as $\lambda$ grows.

			\subsection{Towards new champion codes}\label{sec:good-codes}
			
			G. Brown and A. M. Kasprzyk \cite{BK13} systematically investigated toric codes and their generalized versions associated to points in small polygons. This way, they exhibit large families of good codes, acheiving and sometimes beating the best-known parameters \cite{G07}. Given a champion toric code $\C_P$, even if its projective version $\PC_P$ is unlikely to be a champion code itself, it may indicate how to extend $\C_P$ while keeping good parameters.
			
			\begin{figure}[h]
				\centering
				\includegraphics{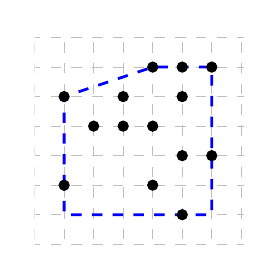}
			\caption{A polygon containing the points defining a champion generalized toric code $[49,14,26]$ over $\F_8$ \cite{BK13}}\label{fig:good-code}
			\end{figure}
			
			\begin{example}\label{ex:good-code}
				Take $U$ the set formed by the black dots in Fig. \ref{fig:good-code}. It defines a champion generalizing toric code $[49,14,26]$ over $\F_8$ \cite{BK13}. Its convex hull does not define a simplicifial toric variety on $\F_8$ since it does not fulfill \ref{H2}. However, it is contained in the the blue dotted polygon $P$ that defines a simplicial toric surface $\bfX_P$ over $\F_8$ and a $[87,14,34]$ code $\PC_P$. With \Magma, evaluating the monomials corresponding to the lattice points in $U$ at torus points but also at two other points gives a $[51,14,27]$ code, and adding again two other points produces a $[53,14,28]$ code. Both of these codes have the best-known parameters \cite{G07}. We have to add three other evaluation points to improve the minimum distance by 1 once more, whereas a $[54,14,29]$ code is already referenced.

			\end{example}
			
			\thanks{The author would like to thank Diego Ruano whose questions and interest motivated the present work, notably the last section.}

			\bibliography{biblio}
			\bibliographystyle{plain}

\end{document}